\newif\ifarxiv
\newif\ifarticleclass
\newcommand{\TheTitle}{On the convergence of adaptive first order methods:\texorpdfstring{\\}{ }proximal gradient and alternating minimization algorithms}
\newcommand{\TheShortTitle}{On the convergence of adaptive first order methods: PG and AMA}
\newcommand{\TheFunding}{%
	Work supported by:
	the Research Foundation Flanders (FWO) postdoctoral grant 12Y7622N and research projects G081222N, G033822N, and G0A0920N;
	European Union's Horizon 2020 research and innovation programme under the Marie Sk\l odowska-Curie grant agreement No. 953348;
	Japan Society for the Promotion of Science (JSPS) KAKENHI grants JP21K17710 and JP24K20737.%
}
\newcommand{\TheKeywords}{%
	Convex minimization%
\AND
	proximal gradient method%
\AND
	alternating minimization algorithm%
\AND
	locally Lipschitz gradient%
\AND
	linesearch-free adaptive stepsizes%
}
\newcommand{\TheSubjclass}{%
	\amsmscLink{65K05}%
\AND
	\amsmscLink{90C06}%
\AND
	\amsmscLink{90C25}%
\AND
	\amsmscLink{90C30}%
\AND
	\amsmscLink{90C47}%
}
	\title{\TheTitle\thanks{\TheFunding}}
	\author{Puya Latafat\thanks{\TheAddressKU}\and Andreas Themelis\thanks{\TheAddressKUJ}\and Panagiotis Patrinos\footnotemark[2]}
	\date{}
	\title[\TheShortTitle]{\TheTitle}
	\author{%
		\Name{Puya Latafat} \Email{puya.latafat@kuleuven.be}\\
		\addr{\TheAddressKU}
	\AND
		\Name{Andreas Themelis} \Email{andreas.themelis@ees.kyushu-u.ac.jp}\\
		\addr{\TheAddressKUJ}
	\AND
		\Name{Panagiotis Patrinos} \Email{panos.patrinos@esat.kuleuven.be}\\
		\addr{\TheAddressKU}
	}
\begin{document}
	\maketitle

	\begin{abstract}%
Building upon recent works on linesearch-free adaptive proximal gradient methods, this paper proposes \refadaPG, a framework that unifies and extends existing results by providing larger stepsize policies and improved lower bounds.
Different choices of the parameters \(\rp\) and \(r\) are discussed and the efficacy of the resulting methods is demonstrated through numerical simulations.
In an attempt to better understand the underlying theory, its convergence is established in a more general setting that allows for time-varying parameters.
Finally, an adaptive alternating minimization algorithm is presented by exploring the dual setting. This algorithm not only incorporates additional adaptivity, but also expands its applicability beyond standard strongly convex settings.

	\end{abstract}

	\begin{keywords}%
		\TheKeywords
	\end{keywords}

	\ifarticleclass
		\begin{subjclass}
			\TheSubjclass
		\end{subjclass}

		\pdfbookmark[1]{\contentsname}{Contents}
		\tableofcontents
	\fi

	\section{Introduction}
The proximal gradient ({\algnamefont PG}) method is the natural extension of gradient descent for constrained and nonsmooth problems.
It addresses nonsmooth minimization problems by splitting them as
\[\tag{P}\label{eq:PG:P}
  \minimize_{x\in\R^n}\; \varphi(x) \coloneqq f(x)+g(x),
\]
where \(f\) is here assumed \emph{locally} Lipschitz differentiable, and \(g\) possibly nonsmooth but with an easy-to-compute proximal mapping, while both being convex (see \cref{ass:PG:P} for details).
It has long been known that performance of first-order methods can be drastically improved by an appropriate stepsize selection as evident in the success of linesearch based approaches.

Substantial effort has been devoted to developing adaptive methods.
Most notably, in the context of stochastic (sub)gradient descent, numerous adaptive methods have been proposed starting with \cite{duchi2011adaptive}.
We only point the reader to few recent works in this area \cite{li2019convergence,ward2019adagrad,yurtsever2021three,ene2021adaptive,defazio2022grad,ivgi2023dog}.
However, although applicable to a more general setting, such approaches tend to suffer from diminishing stepsizes, which can hinder their performance.

Closer to our setting are recent works \cite{grimmer2023accelerated,altschuler2023acceleration} which consider smooth optimization problems, and propose predefined stepsize patterns.
These methods obtain accelerated worst-case rates under global Lipschitz continuity assumptions.
We also mention the recent work \cite{li2023simple} in the constrained smooth setting which, while also being bound to a global Lipschitz continuity assumption, uses an adaptive estimate for the Lipschitz modulus and achieves an \emph{accelerated} worst-case rate.

In this paper, we extend recent results pioneered in \cite{malitsky2020adaptive} and later further developed in \cite{latafat2023adaptive,malitsky2023adaptive}, where novel \mbox{(self-)}\allowbreak adaptive schemes are developed.
We provide a unified analysis that bridges together and improves upon all these works by enabling larger stepsizes and, in some cases, providing tighter lower bounds.
Adaptivity refers to the fact that, in contrast to linesearch methods that employ a \emph{look forward} approach based on trial and error to ensure a sufficient descent in the cost, we \emph{look backward} to yield stepsizes only based on past information.
Specifically, we estimate the Lipschitz modulus of \(\nabla f\) at consecutive iterates \(x^{k-1},x^k\in\R^n\) generated by the algorithm using the quantities
\begin{equation}\label{eq:lL}
		\lk
	\coloneqq
		\frac{
			\innprod{\nabla f(x^k)-\nabla f(x^{k-1})}{x^k-x^{k-1}}
		}{
			\|x^k-x^{k-1}\|^2
		}
\quad\text{and}\quad
		L_k
	\coloneqq
		\frac{
			\|\nabla f(x^k)-\nabla f(x^{k-1})\|
		}{
			\|x^k-x^{k-1}\|
		}.
\end{equation}
Throughout, we stick to the convention \(\frac00=0\) so that \(\lk\) and \(L_k\) are well-defined, positive real numbers.
In addition, we adhere to \(\frac10 = \infty\).
Note also that
\begin{equation}\label{eq:lk<=ck}
	\lk
\leq
	L_k
\leq
	L_{f,\V}
\end{equation}
holds whenever \(L_{f,\V}\) is a Lipschitz modulus for \(\nabla f\) on a convex set \(\V\) containing \(x^{k-1}\) and \(x^k\). 
Despite the mere dependence of these quantities on the previous iterates, they provide a sufficiently refined estimate of the local geometry of \(f\).
In fact, a carefully designed stepsize update rule not only ensures that the stepsize sequence is separated from zero, but also that a sufficient descent-type inequality can be indirectly ensured between \((x^{k+1}, x^k)\) and \((x^k, x^{k-1})\) without any backtracks.

The ultimate deliverable of this manuscript is the general adaptive framework outlined in \cref{alg:PG:pinuxik}.
A special case of it is here condensed into a two-parameter simplified algorithm.

\begin{myalg}[alg:adaPG]{{\algnamefont AdaPG}\(^{\rp,r}\)}%
\begin{tabular}[t]{@{}l@{}}
	Fix \(x^{-1}\in\R^n\) and \(\gamma_0=\gamma_{-1}>0\).
	With \(\lk\) and \(L_k\) as in \eqref{eq:lL}, starting from
\\
	\(x^0=\prox_{\gamma_0g}(x^{-1}-\gamma_0\nabla f(x^{-1}))\),
	iterate for \(k = 0, 1, \ldots\)
\end{tabular}
\vspace*{-.75\baselineskip}%
\begin{subequations}
	\begin{align}\label{eq:adaPG}
		\gamk*
	={} &
		\textstyle
		\gamk\min\set{
			\sqrt{%
				\frac{1}{\rp}+\frac{\gamk}{\gamma_{k-1}}
				\vphantom{
					\frac{
						1 - \frac{r}{\rp}
					}{
						\left[
							(1-2r)
							+
							\gamk^2 L_k^2
							+
							2\gamk\lk(r-1)
						\right]_+
					}
				}
			},
		~
			\sqrt{
				\frac{
					1 - \frac{r}{\rp}
				}{
					\left[
						\gamk^2 L_k^2
						+
						2\gamk\lk(r-1)
						-
						(2r-1)
					\right]_+
				}
			}
		}
	\\
		x^{k+1}
	={} &
		\prox_{\gamk*g}(x^k-\gamk*\nabla f(x^k))
	\end{align}
\end{subequations}
\end{myalg}

\begin{theorem}\label{thm:PG:simplified}%
	Under \cref{ass:PG:P}, for any \(\rp>r\geq\frac12\) the sequence \(\seq{x^k}\) generated by \refadaPG\ converges to some \(x^\star\in\argmin\varphi\).
	If in addition \(\rp \leq \tfrac12(3+\sqrt5)\), then
	\[
		\gamk
	\geq
		\gamma_{\rm min}
	\coloneqq
		\sqrt{\tfrac{1-\frac{r}{\rp}}{\max\set{1,\rp}}}
		\tfrac{1}{L_{f,\V}}
	\quad\text{holds for all }
		k
	\geq
		2\bigl\lceil\log_{1+\frac{1}{\rp}}\bigl(\tfrac{1}{\gamma_0L_{f,\V}}\bigr)\bigr\rceil_+,
	\]
	where \(L_{f,\V}\) is a Lipschitz modulus for \(\nabla f\) on a convex and compact set \(\V\) that contains \(\seq{x^k}\).
	Moreover,
	\(
		\min_{k\leq K} (\varphi(x^k) - \min \varphi)
	\leq
		\frac{\U_1(x^\star)}{\sum_{k=1}^{K+1}\gamk}
	\)
	holds for every \(K\geq1\), where \(\U_1(x^\star)\) is as in \eqref{eq:PG:Uk}.
\end{theorem}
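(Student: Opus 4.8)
The backbone is a Fejér/Lyapunov argument driven by the energy $\U_k(x^\star)$ of \eqref{eq:PG:Uk}, which I expect to combine $\|x^k-x^\star\|^2$, a nonnegative multiple of $\gamk(\varphi(x^k)-\min\varphi)$, and a ``memory'' term proportional to $\|x^k-x^{k-1}\|^2$. The plan is to prove a single one-step inequality $\U_{k+1}(x^\star)\le\U_k(x^\star)$ under \cref{ass:PG:P} and then extract from it, in turn, convergence, the stepsize lower bound, and the rate.

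First I would derive the one-step decrease. The prox update gives the inclusion $\gamk*^{-1}(x^k-x^{k+1})-\nabla f(x^k)\in\partial g(x^{k+1})$; combining the subgradient inequality for $g$ at $x^{k+1}$ with the tangent inequality for the convex $f$ at $x^k$, evaluated at $x=x^\star$, yields a three-point estimate whose only obstructions to telescoping are (i) a quadratic residual $\gamk*^2L_k^2\|x^k-x^{k-1}\|^2$ from bounding $\|\nabla f(x^k)-\nabla f(x^{k-1})\|$ through $L_k$ as in \eqref{eq:lL}, and (ii) a cross term carrying $\gamk\lk$. These are exactly what the two arguments of the $\min$ in \eqref{eq:adaPG} are calibrated to dominate: squaring the second branch gives $\gamk*^2\bigl[\gamk^2L_k^2+2\gamk\lk(r-1)-(2r-1)\bigr]_+\le\gamk^2\bigl(1-\tfrac{r}{\rp}\bigr)$, absorbing (i)--(ii) into a nonnegative quantity, while the first branch $\gamk*^2\le\gamk^2\bigl(\tfrac{1}{\rp}+\gamk/\gamma_{k-1}\bigr)$ keeps the coefficient of the memory term nonnegative and of telescoping sign. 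Matching coefficients then delivers monotonicity. I expect this bookkeeping to be the main obstacle, since all residual signs must be controlled at once---this is where $\rp>r$ (so $1-r/\rp>0$) and $r\ge\tfrac12$ (so $2r-1\ge0$) are both consumed.

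Monotonicity of $\U_k$ renders $\seq{x^k}$ Fejér monotone relative to $\argmin\varphi$, hence bounded and contained in a convex compact $\V$, so \eqref{eq:lk<=ck} certifies $\lk\le L_k\le L_{f,\V}$ along the iterates. For the stepsize I would exploit the recovery mechanism of \eqref{eq:adaPG}: whenever $\gamk<\gamma_{\rm min}$, the bound $\lk\le L_k\le L_{f,\V}$ together with the calibrated value $\gamma_{\rm min}=\sqrt{(1-r/\rp)/\max\set{1,\rp}}\,L_{f,\V}^{-1}$ forces the bracket $[\gamk^2L_k^2+2\gamk\lk(r-1)-(2r-1)]_+$ to vanish (a short case split according to whether $r<1$ or $r\ge1$, using $r\ge\tfrac12$), so the second branch equals $+\infty$ and the update collapses to $\gamk*^2=\gamk^2\bigl(\tfrac{1}{\rp}+\gamk/\gamma_{k-1}\bigr)$. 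Writing $\vartheta_k\coloneqq\gamk/\gamma_{k-1}$, this reads $\vartheta_{k+1}^2=\tfrac{1}{\rp}+\vartheta_k$, and here $\rp\le\tfrac12(3+\sqrt5)$ enters precisely through $\tfrac{1}{\rp}+\tfrac{1}{\sqrt{\rp}}\ge1$, which guarantees that within at most two such steps $\vartheta_k\ge1$ and thereafter $\gamk*^2\ge\bigl(1+\tfrac{1}{\rp}\bigr)\gamk^2$. Geometric growth then lifts $\gamk$ above $\gamma_{\rm min}$, the factor $2$ in $2\lceil\log_{1+1/\rp}(1/(\gamma_0L_{f,\V}))\rceil_+$ arising from passing between the per-step growth $\sqrt{1+1/\rp}$ of $\gamk$ and the base $1+1/\rp$; once above $\gamma_{\rm min}$, the stepsize cannot drop back below. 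For general $\rp>r$ (without the upper bound) the same inactivity argument still yields $\liminf_k\gamk>0$, which is all the convergence claim needs.

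With stepsizes bounded away from $0$, the telescoped memory term gives $\sum_k\|x^{k+1}-x^k\|^2<\infty$, so $x^{k+1}-x^k\to0$; passing to the limit in the inclusion along any cluster point---using continuity of $\nabla f$ and $\liminf_k\gamk>0$---shows every cluster point minimizes $\varphi$, and Fejér monotonicity upgrades this to convergence of the whole sequence to a single $x^\star\in\argmin\varphi$ via Opial's lemma. Finally, instantiating the one-step estimate at $x=x^\star$ and telescoping bounds $\sum_{k=1}^{K+1}\gamk(\varphi(x^k)-\min\varphi)$ by $\U_1(x^\star)$ (discarding the nonnegative terminal energy); estimating the running minimum over $k\le K$ by this $\gamk$-weighted sum yields the stated $\U_1(x^\star)/\sum_{k=1}^{K+1}\gamk$ rate.
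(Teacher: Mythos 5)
The decisive gap is in your convergence argument. You claim that monotonicity of \(\U_k(x^\star)\) yields \(\sum_k\|x^{k+1}-x^k\|^2<\infty\), hence \(x^{k+1}-x^k\to0\), and you then conclude optimality of cluster points and convergence via Opial. This step fails because the update \eqref{eq:adaPG} is calibrated so that, at \emph{every} iteration, at least one of the two nonnegative coefficients in the descent inequality \eqref{eq:PG:ineq} is exactly zero: when the first term of the min is active, \(\rhok*^2=\tfrac1\rp+\rhok\) annihilates the coefficient \(\gamk(1+\rp\rhok-\rp\rhok*^2)\) of \(P_{k-1}(x^\star)\); when the second term is active, \(\rhok*^2\bigl[\gamk^2L_k^2+2\gamk\lk(r-1)-(2r-1)\bigr]_+=1-\tfrac r\rp\) makes the coefficient of \(\|x^k-x^{k-1}\|^2\) vanish (in the notation of \cref{thm:PG:ineq}, \(\xik-\rhok*^2(1+\xik*)[\cdots]=0\)). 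Telescoping therefore only gives that the two \emph{products} tend to zero, cf.\ \eqref{eq:PG:gPto0}; if, say, the curvature branch is active for all large \(k\), the memory term enters with zero weight throughout and no summability of residuals can be extracted, nor does \(\varphi(x^k)\to\min\varphi\) follow, since the function-value coefficient may vanish identically instead. (For the same reason, decrease of \(\U_k\) does not make \(\seq{x^k}\) Fej\'er monotone: the extra terms in \(\U_k\) prevent comparing \(\|x^k-x^\star\|\) across iterations.) This is precisely the difficulty the paper's proof is organized around: \cref{thm:PG:cvg} obtains an optimal cluster point through a trichotomy --- either \(\limsup_k(1+\rp\rhok-\rp\rhok*^2)>0\), or \(\seq{\gamk}\) is unbounded (whence \(P_{k-1}\leq\U_1(x^\star)/\gamk\to0\) along a subsequence), or else \cref{thm:infrhok>1} forces \(\liminf_k\rhok>1\) and hence divergence of \(\gamk\), a contradiction --- and full convergence together with \(\U_k(x^\star)\searrow0\) then requires a further, rather long, contradiction argument. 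Your shortcut replaces exactly this, the hardest part of the paper, with a step that does not go through.

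Two secondary points. First, your recovery mechanism is misstated, although its conclusion survives: for small \(\gamk\) the bracket need \emph{not} vanish (take \(r=\tfrac12\) and \(\lk\ll L_k\), so the bracket equals \(\gamk^2L_k^2-\gamk\lk>0\)); the correct statement (\cref{thm:PG:gamk*:lb1}) is that when \(\gamk\lk<1\) the second term of the min is bounded below by \(\sqrt{1-\nicefrac r\rp}\,/L_k\), so either that branch is active and \(\gamk*\) already exceeds the threshold, or the first branch drives geometric growth; your identification of where \(\rp\leq\tfrac12(3+\sqrt5)\) enters (namely \(t_\varepsilon=1\) in \cref{def:meps}) is correct. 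Second, the rate argument is sound in spirit, but your literal claim \(\sum_{k=1}^{K+1}\gamk P_{k-1}\leq\U_1(x^\star)\) is not what telescoping produces: one must keep the terminal term \(\gamma_{K+1}(1+\rp\rho_{K+1})P_K\) inside \(\U_{K+1}\) and observe that the coefficients \(\gamk(1+\rp\rhok)-\gamk*\rp\rhok*\) telescope, so that their sum together with the terminal coefficient is at least \(\sum_{k=1}^{K+1}\gamk\); the bound on the running minimum then follows.
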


The above worst-case sublinear rate depends on the aggregate of the stepsize sequence, providing a partial explanation for the fast convergence of the algorithm observed in practice.
\refadaPG\ and \cref{thm:PG:simplified} are particular instances of the general framework provided in \cref{sec:PG}, see \cref{rem:relation,thm:PG:pinuxik} for the details.
Specific choices of the parameters \(\rp,r\) nevertheless allow \refadaPG\ to embrace and extend existing algorithms:
\begin{itemize}[leftmargin=*]
\item
	\(r=\frac12\) and \(\rp=1\).
	Then,
	\(
		\gamk*
	=
		\gamk\min\set{
			\sqrt{%
				1+\frac{\gamk}{\gamma_{k-1}}
			},
			~
			\frac{1}{
				\sqrt{
					2\left[\gamk^2L_k^2-\gamk\lk\right]_+
				}
			}
		}
	\)
	coincides with the update in \cite[Alg. 2.1]{latafat2023adaptive} with second term improved by a \(\sqrt2\) factor.

\item
	Owing to the relation \(\gamk^2L_k^2-\gamk\lk\leq\gamk^2L_k^2\), the case above is also a proximal extension of \cite[Alg. 1]{malitsky2023adaptive} which considers
	\(
		\gamk*
	=
		\min\set{
			\gamk
			\sqrt{%
				1+\frac{\gamk}{\gamma_{k-1}}
			},
			~
			\frac{1}{\sqrt{2}L_k}
		}
	\)
	when \(g=0\), and which in turn is also a strict improvement over the previous work \cite{malitsky2020adaptive}.

\item
	\(r=\frac34\) and \(\rp=\frac32\).
	Then,
	\(
		\gamk*
	=
		\gamk\min\set{
			\sqrt{
				\frac{2}{3}+\frac{\gamk}{\gamma_{k-1}}
			},
		~
			\frac{
				1
			}{
				\sqrt{
					[2\gamk^2 L_k^2-1-\gamk\lk]_+
				}
			}
		}
	\)
	recovers the update rule \cite[Alg. 2]{malitsky2023adaptive} (in fact tighter because of the extra \(-\gamk\lk\) term).
\end{itemize}
The interplay between the parameters can then be understood by noting that \(\sqrt{\frac{1}{\rp}+\frac{\gamk}{\gamma_{k-1}}}\) allows the algorithm to recover from a potentially small stepsize, which can only decrease for a controlled number of iterations and will then rapidly enter a phase where it increases linearly until a certain threshold is reached, see the proof of \cref{thm:PG:pinuxik}.
A smaller \(\rp\) allows for a more aggressive recovery, but comes at the cost of more conservative second term.
As for \(r\), values in the range \([\nicefrac12,1]\), such as in the combinations reported in \cref{tab:nu1}, work well in practice.

\vspace{.5ex}%
\par\noindent
	\begin{minipage}{0.44\linewidth}
		\definecolor{lred}{RGB}{255,204,204}  
\definecolor{lblue}{RGB}{204,229,255}  
\definecolor{lgreen}{RGB}{204,255,204}  
\definecolor{peach}{RGB}{255,229,204}  
\definecolor{lorange}{RGB}{255,204,153}  
\centering
\begin{tabular}{@{}ccc>{}c@{}}
	\hline
	\(1 - \nicefrac{r}{\rp}\) & \(\rp\) & \(r\) & \(\gamma_{\rm min}L_{f, \V}\) \\
	\hline
	\hline
	\rowcolor{green!05}
	\(\nicefrac14\) & \(\nicefrac{10}{9}\) & \(\nicefrac{5}{6}\) & \(\nicefrac32\sqrt{\nicefrac1{10}} \approx 0.47\)  \\
	\hline
	\(\nicefrac25\) & \(\nicefrac{8}{5}\) & \(\nicefrac{24}{25}\) & \(\nicefrac12\)\\
	\hline
	\rowcolor{green!05}
	\(\nicefrac12\) & \(\nicefrac53\) & \(\nicefrac{5}{6}\) & \(\sqrt{\nicefrac3{10}} \approx 0.55\)\\
	\hline
	\rowcolor{green!05}
	\(\nicefrac12\) & \(\nicefrac32\) & \(\nicefrac{3}{4}\) & \(\nicefrac1{\sqrt3}\approx 0.57\)\\
	\hline
	\rowcolor{orange!05}
	\(\nicefrac12\) & 1 & \(\nicefrac{1}{2}\) & \(\nicefrac1{\sqrt2} \approx 0.71\)\\
	\hline
	\(\nicefrac35\) & \(\nicefrac52\) & \(1\) & \(\nicefrac{\sqrt{6}}5 \approx 0.49\)\\
	\hline
\end{tabular}
	\end{minipage}
	\hfill
	\begin{minipage}{0.55\linewidth}
		\refstepcounter{table}%
		\label[table]{tab:nu1}%
		\textbf{Table \thetable.}
		Suggested options for \(\rp\) and \(r\) in \protect\refadaPG{}.
		Green cells strike a nice balance between aggressive increases and large lower bounds (\(\gamma_{\rm min}\)) for the stepsize sequence, while the orange cell yields the largest theoretical lower bound.
		Here, \(L_{f,\V}\) is a local Lipschitz modulus for \(\nabla f\) as in \cref{thm:PG:simplified}.
	\end{minipage}
\vspace{.5ex}%

As a final contribution, an adaptive variant of the \emph{alternating minimization algorithm} ({\algnamefont AMA}) of \cite{tseng1991applications} is proposed that addresses composite problems of the form
\[\tag{CP}\label{eq:CP}
  \minimize_{x\in\R^n}\psi_1(x)+\psi_2(Ax).
\]
{\algnamefont AMA} is particularly interesting in settings where \(\psi_1\) is either nonsmooth or its gradient is computationally demanding.
Its convergence was established in \cite{tseng1991applications} by framing it as the dual form of the splitting method introduced in \cite{gabay1983application}, and acceleration techniques have also been adapted to this setting \cite{goldstein2014fast}.
In contrast to existing methods, ours not only incorporates an adaptive stepsize mechanism but also relaxes the strong convexity assumption to mere \emph{local} strong convexity, see \cref{ass:AMA} for details.
\ifarxiv
	\par~
\else
	Due to space limitations, some proofs are deferred to the preprint version \cite{latafat2023convergence}.
\fi

	\section{A general framework for adaptive proximal gradient methods}\label{sec:PG}%
In this section we consider plain proximal gradient iterations of the form
\begin{equation}\label{eq:PG}
	x^{k+1}=\prox_{\gamk*g}\bigl(x^k-\gamk*\nabla f(x^k)\bigr),
\end{equation}
where \(\seq{\gamk}\) is a sequence of strictly positive stepsize parameters.
The main oracles of the method are gradient and proximal maps (see \cite[\S6]{beck2017first} for examples of \emph{proximable} functions).
Whenever \(g\) is convex, for any \(\gamma>0\) it is well known that \(\prox_{\gamma g}\) is \emph{firmly nonexpansive} \cite[\S4.1 and Prop. 12.28]{bauschke2017convex}, a property stronger than Lipschitz continuity.
We here show that even when the stepsizes are time-varying as in \eqref{eq:PG} a similar property still holds for the iterates therein.
This fact is a refinement of \cite[Lem. 12]{malitsky2023adaptive} that follows after an application of Cauchy-Schwarz and that will be used in our main descent inequality.

\begin{lemma}[FNE-like inequality]\label{thm:MMtrick}%
	Suppose that \(g\) is convex and that \(f\) is differentiable.
	Then, for any \(\seq{\gamk}\subset\R_{++}\) and with \(\Hk\coloneqq\id-\gamk\nabla f\), proximal gradient iterates \eqref{eq:PG} satisfy
	\begin{equation}
		\|x^{k+1}-x^k\|^2
	\leq
		\rhok*\innprod{\Hk(x^{k-1})-\Hk(x^k)}{x^k-x^{k+1}}
	\leq
		\rhok*^2\|\Hk(x^{k-1})-\Hk(x^k)\|^2.
	\end{equation}

\end{lemma}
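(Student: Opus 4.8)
The plan is to reduce everything to the firm nonexpansiveness of a \emph{single} proximal operator, by rewriting the two consecutive iterates $x^{k+1}$ and $x^k$ as proximal points of the \emph{same} map. The only obstruction to applying firm nonexpansiveness directly is that $x^{k+1}=\prox_{\gamk* g}(\Hk*(x^k))$ and $x^k=\prox_{\gamk g}(\Hk(x^{k-1}))$ are proximal points at \emph{different} stepsizes, $\gamk*$ and $\gamk$; the heart of the argument is to reconcile this mismatch, which is exactly what the ratio $\rhok*=\gamk*/\gamk$ will encode.

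First I would use the subdifferential characterization of the proximal map afforded by convexity of $g$. From $x^k=\prox_{\gamk g}(\Hk(x^{k-1}))$ one has $\Hk(x^{k-1})-x^k\in\gamk\,\partial g(x^k)$; scaling this inclusion by the positive factor $\gamk*/\gamk$ gives $\frac{\gamk*}{\gamk}\bigl(\Hk(x^{k-1})-x^k\bigr)\in\gamk*\,\partial g(x^k)$, which is precisely the statement that $x^k=\prox_{\gamk* g}(z^k)$ with $z^k\coloneqq x^k+\frac{\gamk*}{\gamk}\bigl(\Hk(x^{k-1})-x^k\bigr)$. Now both $x^{k+1}=\prox_{\gamk* g}(\Hk*(x^k))$ and $x^k=\prox_{\gamk* g}(z^k)$ are images of the single firmly nonexpansive operator $\prox_{\gamk* g}$, so firm nonexpansiveness yields $\|x^{k+1}-x^k\|^2\leq\innprod{x^{k+1}-x^k}{\Hk*(x^k)-z^k}$.

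It then remains to simplify the right factor, which is the one genuinely computational step. Substituting $\Hk*=\id-\gamk*\nabla f$ and the definition of $z^k$, the $x^k$ terms cancel and the gradient contributions recombine: the key cancellation is that they appear with coefficient exactly $\gamk*$, so that $\Hk*(x^k)-z^k=-\frac{\gamk*}{\gamk}\bigl(\Hk(x^{k-1})-\Hk(x^k)\bigr)$, reconstituting $\Hk$ (not $\Hk*$) up to the scalar $\rhok*$. Plugging this in gives the first claimed inequality with $\rhok*=\gamk*/\gamk$. The second inequality is then immediate from Cauchy--Schwarz: bounding $\innprod{\Hk(x^{k-1})-\Hk(x^k)}{x^k-x^{k+1}}\leq\|\Hk(x^{k-1})-\Hk(x^k)\|\,\|x^k-x^{k+1}\|$ and cancelling one factor of $\|x^{k+1}-x^k\|$ (the case $x^{k+1}=x^k$ being trivial) yields $\|x^{k+1}-x^k\|\leq\rhok*\|\Hk(x^{k-1})-\Hk(x^k)\|$, whose square is the right-hand bound. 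The main obstacle is conceptual rather than technical: realizing that the time-varying stepsizes can be reconciled by rescaling the inclusion so that one proximal operator governs both iterates; once that reformulation is in place, firm nonexpansiveness and Cauchy--Schwarz finish the proof.
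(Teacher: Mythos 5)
Your proof is correct. It takes a recognizably different route from the paper's, though the two coincide at their core. You reduce the claim to textbook firm nonexpansiveness of the \emph{single} operator \(\prox_{\gamk*g}\): rescaling the inclusion \(\Hk(x^{k-1})-x^k\in\gamk\partial g(x^k)\) by \(\rhok*\) shows that \(x^k=\prox_{\gamk*g}(z^k)\) for the auxiliary point \(z^k=x^k+\rhok*\bigl(\Hk(x^{k-1})-x^k\bigr)\), and the algebraic identity \(\Hk*(x^k)-z^k=-\rhok*\bigl(\Hk(x^{k-1})-\Hk(x^k)\bigr)\) (which you compute correctly) then turns firm nonexpansiveness into the first claimed inequality. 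The paper instead argues directly: it expands \(\innprod{\gamk^{-1}(\Hk(x^{k-1})-\Hk(x^k))}{x^k-x^{k+1}}\), isolates the term \(\tfrac{1}{\gamk*}\|x^{k+1}-x^k\|^2\), and discards the remainder by monotonicity of \(\partial g\) applied to the subgradients \(\gamk^{-1}(\Hk(x^{k-1})-x^k)\in\partial g(x^k)\) and \(\gamk*^{-1}(\Hk*(x^k)-x^{k+1})\in\partial g(x^{k+1})\). Unwinding firm nonexpansiveness into its monotonicity proof reveals that the two arguments use exactly the same pair of subgradients --- your vector \(\gamk*^{-1}(z^k-x^k)\) equals the paper's \(\gamk^{-1}(\Hk(x^{k-1})-x^k)\) --- so the mathematics is the same; what differs is the packaging. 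Your version makes the lemma's name literal (the inequality \emph{is} firm nonexpansiveness of \(\prox_{\gamk*g}\) at suitably chosen arguments) and confines the stepsize mismatch to the single reference point \(z^k\), which is a nice conceptual observation absent from the paper; the paper's direct computation avoids any auxiliary point and shows explicitly where each term originates. Your treatment of the final Cauchy--Schwarz step, including cancelling a factor of \(\|x^{k+1}-x^k\|\) with the degenerate case \(x^{k+1}=x^k\) handled separately, is also correct and matches what the paper leaves implicit.
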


Throughout, we study problem \eqref{eq:PG:P} under the following assumptions.

\begin{assumption}[Requirements for problem \eqref{eq:PG:P}]\label{ass:PG:P}~
	\begin{enumeratass}
	\item
		\(\func{f}{\R^n}{\R}\) is convex and has locally Lipschitz continuous gradient.
	\item
		\(\func{g}{\R^n}{\Rinf}\) is proper, lsc, and convex.
	\item
		There exists \(x^\star \in \argmin f+g\).
	\end{enumeratass}
\end{assumption}

The main adaptive framework, involving two time-varying parameters \(\pik,\xik\), is given in \cref{alg:PG:pinuxik}.
The shorthand notation \(\one_{\gamk\lk\geq1}\) equals 1 if \(\gamk\lk\geq1\) and 0 otherwise, while for \(t\in\R\) we denote \([t]_+\coloneqq\max\set{0,t}\).

\begin{algorithm}[htb]
	\caption{General adaptive proximal gradient framework}%
	\label{alg:PG:pinuxik}%
\begin{algorithmic}[1]
\itemsep=3pt
\Require
		\begin{tabular}[t]{@{}l@{}}
			starting point \(x^{-1}\in\R^n\),~
			stepsizes \(\gamma_0=\gamma_{-1}>0\)
		\\
			parameters
			\(\frac12<\pimin\leq\pimax\),~
			\(0<\ximin\leq2\pimin-1\),~
		\end{tabular}
\Initialize
	\(
		x^0
	=
		\prox_{\gamma_0g}(x^{-1}-\gamma_0\nabla f(x^{-1}))
	\),~
	\(\rho_0=1\),~
	\(\pi_0\in[\pimin,\pimax]\),~
	\(\xi_0\geq\ximin\)

\item[\algfont{Repeat for} \(k=0,1,\ldots\) until convergence]

\State\label{state:pinuxik}%
	Let \(\lk\) and \(L_k\) be as in \eqref{eq:lL}, and choose \(\pik*,\xik*\) such that
	\[
		\textstyle
		\xik*\geq\ximin,
	\quad
		\rk*\coloneqq\frac{\pik*}{1+\xik*}\geq\frac12,
	\quad
		\pimin
	\leq
		\pik*
	\leq
		\min\set{\pimax,\pik+\one_{\gamk\lk\geq1}}
	\]

\State \label{state:pinuxik:gamk*}%
	\(
		\gamk*
	=
		\gamk\min\set{
			\sqrt{
				\frac{1+\pik\rhok}{\pik*}
			},
			~
			\sqrt{
				\frac{\rk*}{\pik*}
				\frac{
					\xik
				}{
					\left[
						\gamk^2L_k^2
						+
						2\gamk\lk\left(\rk*-1\right)
						-
						(2\rk*-1)
					\right]_+
				}
			}
		}
	\)

\State
	Set \(\rhok*=\frac{\gamk*}{\gamk}\) and update
	\(
		x^{k+1}
	=
		\prox_{\gamk*g}(x^k-\gamk*\nabla f(x^k))
	\)
\end{algorithmic}

\end{algorithm}

\ifarticleclass
	\begin{remark}[relation to \refadaPG]%
\else
	\begin{remark}\textbf{(relation to \refadaPG).}
\fi
	\label{rem:relation}%
	Whenever \(\pik\equiv\pimin=\pimax\eqqcolon\rp\) and \(\xik\equiv\ximin\eqqcolon\xi\), the conditions in \cref{alg:PG:pinuxik} reduce to
	\(
		\rp > \tfrac12
	\),
	\(
		r = \tfrac{\rp}{\xi + 1} \geq \tfrac12
	\),
	and
	\(
		\xi = \tfrac{\rp}{r} -1 > 0
	\);
	equivalently, \(\rp > r \geq \frac12\) as in \cref{thm:PG:simplified}.
\end{remark}


In what follows, for \(x\in\dom\varphi\) we adopt the notation
\begin{equation}
	P_k(x)\coloneqq\varphi(x^k)-\varphi(x).
\end{equation}
Our convergence analysis revolves around showing that under appropriate stepsize update and parameter selection the function
\begin{equation}\label{eq:PG:Uk}
		\U_k(x)
	\coloneqq{}
		\tfrac{1}{2}\|x^k-x\|^2
		+
		\gamk(1+\pik\rhok)P_{k-1}(x)
		+
		\tfrac{\xik}{2}\|x^k-x^{k-1}\|^2,
\end{equation}
monotonically decreases along the iterates for all \(x\in\argmin\varphi\).
The main inequality, outlined in \cref{thm:PG:ineq}, extends the one in \cite[Eq. (2.8)]{latafat2023adaptive} by blending it with \cref{thm:MMtrick}.
This combination is achieved by adding and subtracting a multiple of the residual scaled by a newly added parameter \(\xik\).
The proof is otherwise adapted from that of \cite[Lem. 2.2]{latafat2023adaptive}, and is included in full detail in the \ifarxiv appendix\else preprint version\fi.

\begin{theorem}[main {\algnamefont PG} inequality]\label{thm:PG:ineq}%
	Consider a sequence \(\seq{x^k}\) generated by {\algnamefont PG} iterations \eqref{eq:PG} under \cref{ass:PG:P}, and denote \(\rhok*\coloneqq\frac{\gamk*}{\gamk}\).
	Then, for any \(x\in\dom\varphi\), \(\pik,\xik\geq0\) and \(\oldnu_k>0\), \(k\in\N\),
	\begin{align*}
		\U_{k+1}(x)
	\leq{} &
		\U_k(x)
		-
		\gamk(1+\pik\rhok-\pik*\rhok*^2)P_{k-1}(x)
		-
		\tfrac12\|x^k - x^{k-1}\|^2\biggl\{
			1+\xik-\tfrac{1}{\oldnu_k}
	\\
	&
			-\rhok*^2(\oldnu_{k+1}+\xik*)\Bigl[
				\gamk^2L_k^2
				+
				2
				\gamk\lk
				\bigl(\tfrac{\pik*}{\oldnu_{k+1}+\xik*}-1\bigr)
				-
				\bigl(2\tfrac{\pik*}{\oldnu_{k+1}+\xik*}-1\bigr)
			\Bigr]
		\biggr\},
	\numberthis\label{eq:PG:ineq}
	\end{align*}
	where \(\U_k(x)\) is as in \eqref{eq:PG:Uk}.
	In particular, with \(\oldnu_k\equiv1\), if \(\varphi(x)\leq\inf_{k\in\N}\varphi(x^k)\) (for instance, if \(x\in\argmin\varphi\)),
	and
	\begin{equation}\label{eq:PG:rhokUB}
		\ifarticleclass\else\textstyle\fi
		0
	<
		\rhok*^2
	\leq
		\min\set{
			\frac{1+\pik\rhok}{\pik*},
			~
			\frac{
				\xik
			}{
				(1+\xik*)
				\left[
					\gamk^2L_k^2
					+
					2\gamk\lk\left(\frac{\pik*}{1+\xik*}-1\right)
					+
					\left(1-2\frac{\pik*}{1+\xik*}\right)
				\right]_+
			}
		}
	\end{equation}
	(with \(\xik>0\)) holds for every \(k\), then the coefficients of \(P_{k-1}(x)\) and \(\|x^k-x^{k-1}\|^2\) in \eqref{eq:PG:ineq} are negative, \(\U_{k+1}(x)\leq\U_k(x)\) and thus \(\seq{\U_k(x)}\) converges and \(\seq{x^k}\) is bounded.
\end{theorem}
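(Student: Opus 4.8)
The plan is to prove the one-step estimate \eqref{eq:PG:ineq} by expanding the increment \(\U_{k+1}(x)-\U_k(x)\) with \(\U_k\) as in \eqref{eq:PG:Uk}, and then to deduce the ``in particular'' part by reading off the signs of the two coefficients it produces.

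\textbf{Deriving \eqref{eq:PG:ineq}.} I would start from the cosine identity \(\tfrac12\|x^{k+1}-x\|^2-\tfrac12\|x^k-x\|^2=\langle x^{k+1}-x^k,\,x^k-x\rangle+\tfrac12\|x^{k+1}-x^k\|^2\) and turn the cross term into function values through the two convexity mechanisms available under \cref{ass:PG:P}: the subgradient inequality for \(f\), and the optimality condition of the proximal step \eqref{eq:PG}, namely \(\tfrac1{\gamma_{k+1}}(x^k-x^{k+1})-\nabla f(x^k)\in\partial g(x^{k+1})\). This reproduces the descent computation behind \cite[Eq.~(2.8)]{latafat2023adaptive}; weighting the cost-decrease contributions by \(\pi_k,\pi_{k+1}\) and the stepsize ratios \(\rho_k,\rho_{k+1}\) yields the \(P_{k-1}(x)\) term with coefficient \(1+\pi_k\rho_k-\pi_{k+1}\rho_{k+1}^2\), together with a residual term in \(\|x^{k+1}-x^k\|^2\) and a leftover cross term of the form \(\langle H_k(x^{k-1})-H_k(x^k),\,\cdot\,\rangle\).

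\textbf{Blending in \cref{thm:MMtrick}.} The decisive move is to bound the \emph{new} residual \(\|x^{k+1}-x^k\|^2\) by the \emph{previous} one \(\|x^k-x^{k-1}\|^2\). I would invoke both forms of \cref{thm:MMtrick} and expand, using the definitions \eqref{eq:lL}, the two atoms \(\|H_k(x^{k-1})-H_k(x^k)\|^2=(1-2\gamma_k\ell_k+\gamma_k^2L_k^2)\|x^k-x^{k-1}\|^2\) and \(\langle H_k(x^{k-1})-H_k(x^k),\,x^{k-1}-x^k\rangle=(1-\gamma_k\ell_k)\|x^k-x^{k-1}\|^2\). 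These are exactly the pieces that assemble the bracket in \eqref{eq:PG:ineq}, since \(\gamma_k^2L_k^2+2\gamma_k\ell_k(\tfrac{\pi_{k+1}}{\oldnu_{k+1}+\xi_{k+1}}-1)-(2\tfrac{\pi_{k+1}}{\oldnu_{k+1}+\xi_{k+1}}-1)=(1-2\gamma_k\ell_k+\gamma_k^2L_k^2)-2\tfrac{\pi_{k+1}}{\oldnu_{k+1}+\xi_{k+1}}(1-\gamma_k\ell_k)\). The free parameter enters via a Young-type split of the inner-product form of \cref{thm:MMtrick}, which is also what accounts for the companion \(-\tfrac1{\oldnu_k}\) term one index earlier, while the residuals \(\tfrac{\xi_k}2\|x^k-x^{k-1}\|^2\) and \(\tfrac{\xi_{k+1}}2\|x^{k+1}-x^k\|^2\) are added and subtracted to complete \(\U_k\) and \(\U_{k+1}\) as in \eqref{eq:PG:Uk}. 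I expect the \textbf{main obstacle} to be precisely this bookkeeping: one must verify that the cross term left by the convexity descent cancels against the \(2\rho_{k+1}^2\pi_{k+1}\langle H_k(x^{k-1})-H_k(x^k),\,x^k-x^{k-1}\rangle\) piece generated by \cref{thm:MMtrick}, so that only the clean \(P_{k-1}(x)\) and \(\|x^k-x^{k-1}\|^2\) terms survive, and that the factor \(\rho_{k+1}^2\) and the ratio \(\tfrac{\pi_{k+1}}{\oldnu_{k+1}+\xi_{k+1}}\) land in exactly the positions dictated by \eqref{eq:PG:ineq}.

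\textbf{The monotonicity consequence.} Fixing \(\oldnu_k\equiv1\), the first term on the right of \eqref{eq:PG:ineq} has coefficient \(-\gamma_k(1+\pi_k\rho_k-\pi_{k+1}\rho_{k+1}^2)\), which is nonpositive exactly under the first bound in \eqref{eq:PG:rhokUB}; since \(\varphi(x)\le\inf_{k\in\N}\varphi(x^k)\) forces \(P_{k-1}(x)=\varphi(x^{k-1})-\varphi(x)\ge0\), this contribution is \(\le0\). With \(\oldnu_k\equiv1\) the braces reduce to \(\xi_k-\rho_{k+1}^2(1+\xi_{k+1})[\,\cdots\,]\), which is nonnegative exactly under the second bound in \eqref{eq:PG:rhokUB}, the \([\,\cdot\,]_+\) covering the case of a nonpositive bracket; hence the residual contribution is \(\le0\) as well, and \(\U_{k+1}(x)\le\U_k(x)\). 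Finally, each summand of \(\U_k(x)\) in \eqref{eq:PG:Uk} is nonnegative (using \(1+\pi_k\rho_k>0\) and \(P_{k-1}(x)\ge0\)), so \(\seq{\U_k(x)}\) is nonincreasing and bounded below, thus convergent; and \(\tfrac12\|x^k-x\|^2\le\U_k(x)\le\U_1(x)\) for \(k\ge1\) shows that \(\seq{x^k}\) is bounded.
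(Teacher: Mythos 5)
Your proposal is correct and follows essentially the same route as the paper's proof: the prox-optimality/convexity descent computation adapted from \cite[Lem.~2.2]{latafat2023adaptive}, a Young split with a free parameter (the paper's \(\varepsilon_{k+1}=\nicefrac{1}{\oldnu_{k+1}\rhok*}\)), the added-and-subtracted \(\xik*\)-residual absorbed through \cref{thm:MMtrick}, and your bracket identity is exactly what the paper's selection \(\thetk*=\pik*\rhok*\) produces when collecting the \(\|x^{k-1}-x^k\|^2\) terms. The concluding sign analysis under \eqref{eq:PG:rhokUB} (including the \([\cdot]_+\) case distinction) and the boundedness argument via \(\tfrac12\|x^k-x\|^2\leq\U_k(x)\leq\U_0(x)\) likewise coincide with the paper's.
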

\grayout{%
\ifarxiv
	\begin{proof}
		See \cref{proofthm:PG:ineq}.
	\end{proof}
\fi
}%

Consistently with what was first observed in \cite{malitsky2020adaptive}, inequality \eqref{eq:PG:rhokUB} confirms that stepsizes should both not grow too fast and be controlled by the local curvature of \(f\).
We next show that, under a technical condition on \(\pik\), all that remains to do is ensuring that the stepsizes do not vanish, which is precisely the reason behind the restrictions on the parameters \(\pik\) and \(\xik\) prescribed in \cref{alg:PG:pinuxik}, as \cref{thm:PG:pinuxik} will ultimately demonstrate.
The technical condition turns out to be a controlled growth of \(\pik\), needed to guarantee that a sequence \(\changerho\rhok*\approx\sqrt{\frac{1+\pik\rhok}{\pik*}}\) will eventually stay above 1.

\begin{theorem}[convergence of {\algnamefont PG} with nonvanishing stepsizes]\label{thm:PG:cvg}%
	Consider the iterates generated by \eqref{eq:PG} under \cref{ass:PG:P}, with \(\gamk*=\gamk\rhok*\) complying with \eqref{eq:PG:rhokUB}.
	If \(\pik*\leq1+\pik\) holds for every \(k\) and \(\inf_{k\in\N}\gamk>0\), then:
	\begin{enumerate}
	\item \label{thm:PG:!x*}%
		The (bounded) sequence \(\seq{x^k}\) has exactly one optimal accumulation point.

	\item \label{thm:PG:x*}%
		If, in addition, \(\seq{\pik}\) and \(\seq{\xik}\) are chosen bounded and bounded away from zero, then the entire sequence \(\seq{x^k}\) converges to a solution \(x^\star\in\argmin\varphi\), and \(\U_k(x^\star)\searrow0\).
	\end{enumerate}
\end{theorem}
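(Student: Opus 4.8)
The plan is to convert the one-step estimate \eqref{eq:PG:ineq} into a Fejér/Opial argument. Fix $x^\star\in\argmin\varphi$, so that $P_{k-1}(x^\star)\geq0$. Since \eqref{eq:PG:rhokUB} holds (with $\oldnu_k\equiv1$), \cref{thm:PG:ineq} already gives that $\seq{\U_k(x^\star)}$ is nonincreasing, hence convergent, and that $\seq{x^k}$ is bounded; I fix a compact convex $\V$ containing $\seq{x^k}$ on which $\nabla f$ has modulus $L_{f,\V}$, so $L_k\leq L_{f,\V}$ by \eqref{eq:lk<=ck}. Telescoping \eqref{eq:PG:ineq} then yields
\[
	\textstyle
	\sum_k\gamk(1+\pik\rhok-\pik*\rhok*^2)\,P_{k-1}(x^\star)<\infty
\quad\text{and}\quad
	\sum_k C_k\|x^k-x^{k-1}\|^2<\infty,
\]
where $C_k\geq0$ is the bracketed coefficient of $\tfrac12\|x^k-x^{k-1}\|^2$ in \eqref{eq:PG:ineq}. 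The stepsizes are bounded below by hypothesis and—by boundedness of the iterates and the curvature branch of the update—bounded above, so $\seq{\rhok}$ is bounded and the weights in the first sum are comparable to $\delk\coloneqq1+\pik\rhok-\pik*\rhok*^2\geq0$.

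Next I would show $\|x^{k+1}-x^k\|\to0$ and $P_{k-1}(x^\star)\to0$. Expanding $\Hk(x^{k-1})-\Hk(x^k)$ and using \eqref{eq:lL}, the rightmost inequality of \cref{thm:MMtrick} becomes $\|x^{k+1}-x^k\|^2\leq\rhok*^2(1-2\gamk\lk+\gamk^2L_k^2)\|x^k-x^{k-1}\|^2$; I would combine this contraction-type recursion with the summability of $\sum_k C_k\|x^k-x^{k-1}\|^2$ and the boundedness of the stepsizes to drive the residual to $0$. For the cost gap, the growth condition $\pik*\leq1+\pik$ prevents $\seq{\delk}$ from being summable (so $\sum_k\delk=\infty$), which together with $\sum_k\delk P_{k-1}(x^\star)<\infty$ gives $\liminf_k P_{k-1}(x^\star)=0$; a subsequence realizing it has, by boundedness, an accumulation point $\bar x$ with $\varphi(\bar x)=\min\varphi$ by lower semicontinuity, so an optimal accumulation point exists. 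This second step is the main obstacle: the stepsize rule makes \emph{exactly one} of the two coefficients in \eqref{eq:PG:ineq} degenerate at each iteration—the $P_{k-1}$-coefficient when the stepsize increases, and $C_k$ when it is curvature-limited—so honest vanishing cannot be read off either sum alone and must be recovered by coupling the two regimes through \cref{thm:MMtrick} and the boundedness of $\seq{\gamk}$, which is precisely where $\pik*\leq1+\pik$ is used.

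To finish item~\ref{thm:PG:!x*} I would prove the optimal accumulation point is unique. If $\bar x,\tilde x$ are both optimal accumulation points, then $P_{k-1}(\bar x)=P_{k-1}(\tilde x)=\varphi(x^{k-1})-\min\varphi$, so the parameter-dependent terms of $\U_k$ cancel and $\U_k(\bar x)-\U_k(\tilde x)=\tfrac12\|x^k-\bar x\|^2-\tfrac12\|x^k-\tilde x\|^2$, whose left-hand side converges by monotonicity. Hence $\innprod{x^k}{\tilde x-\bar x}$ converges; taking limits along subsequences tending to $\bar x$ and to $\tilde x$ and subtracting gives $\|\bar x-\tilde x\|^2=0$, i.e.\ $\bar x=\tilde x$.

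For item~\ref{thm:PG:x*}, boundedness of $\seq{\pik}$ and $\seq{\xik}$ away from $0$ and above makes $\gamk(1+\pik\rhok)$ bounded and $\xik\|x^k-x^{k-1}\|^2\to0$, so $\U_k(x^\star)=\tfrac12\|x^k-x^\star\|^2+\gamk(1+\pik\rhok)P_{k-1}(x^\star)+o(1)$ for the unique optimal accumulation point $x^\star$. Writing $\ell\coloneqq\lim_k\U_k(x^\star)\geq0$ and using $\liminf_k P_{k-1}(x^\star)=0$ together with the vanishing residual, $\ell>0$ would produce a subsequence along which $\|x^k-x^\star\|^2\to2\ell>0$ while $\varphi(x^k)\to\min\varphi$, i.e.\ a second optimal accumulation point, contradicting uniqueness. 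Thus $\ell=0$, both nonnegative summands in $\U_k(x^\star)$ vanish, and therefore $x^k\to x^\star$ with $\U_k(x^\star)\searrow0$.
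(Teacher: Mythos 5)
Your telescoping of \eqref{eq:PG:ineq} is a valid start, and your uniqueness step is sound --- it is exactly the paper's Opial-type argument via convergence of \(\U_k\) at two optimal points. Everything else, however, leans on the assertion that \(\seq{\gamk}\) is bounded above, which you justify by ``boundedness of the iterates and the curvature branch of the update'', and which is false. Condition \eqref{eq:PG:rhokUB} caps the ratio \(\rhok*\), not the stepsize: its curvature term has \([\,\cdot\,]_+\) in the denominator, which vanishes whenever the bracket is nonpositive (e.g.\ \(\lk=L_k=0\), \(f\) affine along the iterates), and even when it is positive it only yields an upper bound on \(\gamk*\) of order \(1/L_k\), and \(L_k\) admits no positive lower bound. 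Unbounded stepsize sequences are a genuine and central scenario here: the paper's proof of item~\ref{thm:PG:x*} is mostly devoted to them (the chain of contradiction claims culminating in the doubling subsequence \(k_{i+1}=\min\{k\geq k_i:\gamk\geq2\gamma_{k_i}\}\)), whereas your item~\ref{thm:PG:x*} argument --- boundedness of \(\gamk(1+\pik\rhok)\) and vanishing of \(\xik\|x^k-x^{k-1}\|^2\) --- covers only the bounded-stepsize case, which the paper dispatches quickly with \cref{thm:Pkto0}.

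The second gap is your route to \(\liminf_kP_{k-1}(x^\star)=0\). Writing, as you do, \(\delk\coloneqq1+\pik\rhok-\pik*\rhok*^2\), the claim that \(\pik*\leq1+\pik\) ``prevents \(\seq{\delk}\) from being summable'' is false: nothing in \eqref{eq:PG:rhokUB} forbids taking \(\rhok*^2=\tfrac{1+\pik\rhok}{\pik*}\) at every iteration (with \(f\) affine the curvature branch is \(+\infty\)), in which case \(\delk\equiv0\). The actual role of \(\pik*\leq1+\pik\) is through \cref{thm:infrhok>1}: if \(\delk\to0\) while \(\seq{\gamk}\) stays bounded, that lemma forces \(\liminf_k\rhok>1\) and hence geometric growth of \(\gamk\), a contradiction. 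This produces the paper's trichotomy --- either \(\limsup_k\delk>0\) (conclude via the telescoped limit and \(\inf_k\gamk>0\)), or \(\seq{\gamk}\) is unbounded (conclude via \(\gamk P_{k-1}\leq\U_k(x)\leq\U_1(x)\), i.e.\ \eqref{eq:PG:gamP}), or the remaining case is impossible --- which your single-case argument cannot replace, and which is precisely where the key lemma you lack enters. Finally, you never establish \(\|x^{k+1}-x^k\|\to0\); you yourself flag it as ``the main obstacle'' and defer to an unspecified coupling of the two regimes. The paper never needs this global statement: residual decay is only extracted along suitable subsequences via \cref{thm:Pkto0}, and the unbounded-stepsize regime is handled by the separate contradiction machinery. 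As written, both the existence claim in item~\ref{thm:PG:!x*} and all of item~\ref{thm:PG:x*} remain unproven.
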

\grayout{%
\ifarxiv
	\begin{proof}
		See \cref{proofthm:PG:cvg}.
	\end{proof}
\fi
}%

We now turn to the last piece of the puzzle, namely enforcing a strictly positive lower bound on the stepsizes. The following elementary lemma provides the key insight to achieve this.

\begin{lemma}\label{thm:PG:gamk*:lb1}%
	Let \(f\) be convex and differentiable, and consider the iterates generated by \cref{alg:PG:pinuxik}.
	Then, for every \(k\in\N\) such that \(\gamk\lk<1\) it holds that
	\begin{equation}\label{eq:gamgeq:lk}
		\gamk*
	\geq
		\min\set{
			\gamk
			\sqrt{
				\tfrac{1}{\pimax}+\rhok
			},
			~
			\sqrt{
				\tfrac{\ximin\rmin}{\pimax}
			}
			\tfrac{1}{L_k}
		}.
	\end{equation}
\end{lemma}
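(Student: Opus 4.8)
The plan is to bound the two arguments of the $\min$ defining \(\gamk*\) separately, exploiting the hypothesis \(\gamk\lk<1\) at the two decisive places. First I would record the consequence of \(\gamk\lk<1\) that drives everything: the indicator \(\one_{\gamk\lk\geq1}\) vanishes, so the selection rule in \cref{state:pinuxik} forces \(\pik*\leq\min\set{\pimax,\pik}\). For the first term this is exactly what is needed. Writing \(\frac{1+\pik\rhok}{\pik*}=\frac{1}{\pik*}+\frac{\pik}{\pik*}\rhok\) and using \(\pik*\leq\pimax\) (so \(\frac{1}{\pik*}\geq\frac{1}{\pimax}\)) together with \(\pik*\leq\pik\) and \(\rhok>0\) (so \(\frac{\pik}{\pik*}\rhok\geq\rhok\)) gives \(\sqrt{\frac{1+\pik\rhok}{\pik*}}\geq\sqrt{\frac1{\pimax}+\rhok}\), and hence \(\gamk\sqrt{\frac{1+\pik\rhok}{\pik*}}\geq\gamk\sqrt{\frac1{\pimax}+\rhok}\), which matches the first term on the right of \eqref{eq:gamgeq:lk}.

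For the second term, the crux is to show that the bracket \(D\coloneqq\gamk^2L_k^2+2\gamk\lk(\rk*-1)-(2\rk*-1)\) never exceeds \(\gamk^2L_k^2\), so that \([D]_+\leq\gamk^2L_k^2\). This reduces to \(2\gamk\lk(\rk*-1)\leq 2\rk*-1\), which I would verify by a short case split on the sign of \(\rk*-1\): if \(\rk*\leq1\) the left side is nonpositive while the right side is nonnegative (since \(\rk*\geq\tfrac12\)); if \(\rk*>1\), then \(\gamk\lk<1\) gives \(2\gamk\lk(\rk*-1)<2(\rk*-1)<2\rk*-1\). Substituting \([D]_+\leq\gamk^2L_k^2\) into the second argument of the \(\min\) yields \(\gamk^2\,\frac{\rk*}{\pik*}\frac{\xik}{[D]_+}\geq\frac{\rk*\xik}{\pik*L_k^2}\), and invoking the uniform bounds \(\rk*\geq\rmin\), \(\xik\geq\ximin\) (propagated from the initialization \(\xi_0\geq\ximin\) and the standing constraint \(\xik*\geq\ximin\)), and \(\pik*\leq\pimax\) shows this is at least \(\frac{\rmin\ximin}{\pimax L_k^2}\), i.e.\ the square of the second term of the claimed bound.

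Combining the two estimates, each argument of the \(\min\) dominates the corresponding target term, so the \(\min\) itself dominates \(\min\set{\gamk\sqrt{\tfrac1{\pimax}+\rhok},~\sqrt{\tfrac{\ximin\rmin}{\pimax}}\tfrac1{L_k}}\), which is the claim. The only case needing care is \([D]_+=0\): then the second argument is \(+\infty\) by the convention \(\frac10=\infty\), the \(\min\) is realized by the first argument alone, and the bound still holds. I expect the main obstacle to be the bracket estimate \([D]_+\leq\gamk^2L_k^2\); the rest is monotonicity bookkeeping, but that step is where the hypothesis \(\gamk\lk<1\) is genuinely used and where the \(\rk*\geq\tfrac12\) constraint from the algorithm must be brought in.
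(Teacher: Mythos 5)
Your proposal is correct and takes essentially the same approach as the paper's proof: both bound the two arguments of the \(\min\) separately, using the vanishing indicator \(\one_{\gamk\lk\geq1}=0\) to get \(\pik*\leq\pik\leq\pimax\) for the first term, and showing that the bracket never exceeds \(\gamk^2L_k^2\) for the second, before invoking \(\rk*\geq\rmin\), \(\xik\geq\ximin\), \(\pik*\leq\pimax\). The only cosmetic difference is that your case split on whether \(\rk*\leq1\) replaces the paper's monotonicity-in-\(\rk*\) argument combined with the identity \(\gamk^2L_k^2+2\gamk\lk(\rmin-1)+(1-2\rmin)=\gamk^2L_k^2-\gamk\lk+(\gamk\lk-1)(2\rmin-1)\); just note that your first case tacitly uses \(\lk\geq0\), which the paper records explicitly up front as a consequence of convexity.
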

\begin{proof}
	We start by observing that the assumptions on \(f\) guarantee that \(0\leq\lk\leq L_k\).
	The (squared) second term in the minimum of \cref{state:pinuxik:gamk*} can be lower bounded as follows
	\begin{align*}
		\tfrac{\rk*}{\pik*}
		\tfrac{
			\xik
		}{
			\left[
				\gamk^2L_k^2
				+
				2\gamk\lk\left(\rk*-1\right)
				+
				\left(1-2\rk*\right)
			\right]_+
		}
	\geq{} &
		\tfrac{\ximin}{\pimax}
		\tfrac{
			\rmin
		}{
			\left[
				\gamk^2L_k^2
				+
				2\gamk\lk\left(\rmin-1\right)
				+
				\left(1-2\rmin\right)
			\right]_+
		}
	\\
	={} &
		\tfrac{\ximin}{\pimax}
		\tfrac{
			\rmin
		}{
			\left[
				\gamk^2L_k^2
				-
				\gamk\lk
				+
				(\gamk\lk - 1)\left(2\rmin - 1\right)
			\right]_+
		}
	\geq{}
		\tfrac{\ximin\rmin}{\pimax\gamk^2L_k^2},
		\numberthis \label{eq:lb:temp}
	\end{align*}
	where the first inequality follows from the fact that the left-hand side is increasing with respect to \(\rk*\), and the second inequality follows since \(\rmin \geq \nicefrac12\).
	In turn, the claimed inequality \eqref{eq:gamgeq:lk} follows from the fact that \(\pik*\leq\pik\leq\pimax\) whenever \(\gamk\lk<1\), see \cref{state:pinuxik}.
\end{proof}

This lemma already hints at a potential lower bound for the stepsize, since boundedness of the sequence \(\seq{x^k}\) ensures lower boundedness of the second term in the minimum.
As for the first term, as long as \(\pik\) is upper bounded, the stepsize can only decrease for a controlled number of iterations.
This arguments will be formally completed in the proof of \cref{thm:PG:pinuxik}, where the following notation will be instrumental.

{\changerho
	\begin{definition}\label{def:meps}%
		Let \(\varepsilon>0\).
		With \(\rho_1=\sqrt{\varepsilon}\) and \(\rhot*=\sqrt{\varepsilon+\rhot}\,\) for \(t\geq1\), we denote
		\[
			t_\varepsilon
		\coloneqq
			\max\set{t\in\N}[\rho_1,\dots,\rhot<1]
		\quad\text{and}\quad
			\m(\varepsilon)
		\coloneqq
			\textstyle\prod_{t=1}^{t_\varepsilon}\rhot.
		\]
	\end{definition}

	Notice that \(\m(\varepsilon)\leq1\) and equality holds iff \(\varepsilon\geq1\) (equivalently, iff \(t_\varepsilon=0\)).
	For \(\varepsilon\in(0,1)\), \(t_\varepsilon\) is a well-defined strictly positive integer, owing to the monotonic increase of \(\varrho_t\) and its convergence to the positive root of the equation \(\varrho^2-\varrho-\varepsilon=0\).
	In particular, \(\m(\varepsilon)\leq\varrho_1=\sqrt{\varepsilon}\) and identity holds if \(t_\varepsilon=1\), that is, \(\sqrt{\varepsilon+\sqrt{\varepsilon}}\geq1\) (and \(\varepsilon<1\)).
	This leads to a partially explicit expression
	\begin{equation}\label{eq:meps=}
		\begin{cases}[c @{~~\text{and}~~} c @{\quad} l]
			t_\varepsilon = 1
		&
			\m(\varepsilon) = \sqrt{\min\set{1,\varepsilon}}
		&
			\text{if }
			\varepsilon\geq\tfrac{3- \sqrt5}{2}\approx0.382
		\\[3pt]
			1<t_\varepsilon\leq\bigl\lceil\frac{1}{\varepsilon(2-\varepsilon)}\bigr\rceil
		&
			\sqrt{\varepsilon^{t_\varepsilon}} < \m(\varepsilon) < \sqrt{\varepsilon}
		\otherwise.
		\end{cases}
	\end{equation}
	The bound on \(t_\varepsilon\) in the second case is obtained by observing that
	\[
		1
	>
		\rho_{t_\varepsilon}
	=
		\rho_{t_\varepsilon}-\rho_{t_\varepsilon}^2+\varepsilon+\rho_{t_\varepsilon-1}
	=\dots=
		\sum_{t=1}^{t_\varepsilon}(\rhot-\rhot^2)
		+
		(t_\varepsilon-1)\varepsilon
	\geq
		(t_\varepsilon-1)\varepsilon(1-\varepsilon)
		+
		(t_\varepsilon-1)\varepsilon,
	\]
	where we used the fact that \(\varepsilon\leq\rhot\leq1-\varepsilon\) and thus \(\rhot-\rhot^2\geq\varepsilon(1-\varepsilon)\) for \(t=1,\dots,t_\varepsilon-1\).
	We also remark that the lower bound in the simplified setting of \cref{thm:PG:simplified} pertains to the case when \(t_\varepsilon =1\), since \(\varepsilon = \tfrac1{\rp}\) falls under the first case above.
}%

\grayout{%
\begin{lemma}\label{thm:rk}%
	The parameter \(\rk\) selected at \cref{state:pinuxik} satisfies
	\(
		\tfrac12\leq\rk\leq\tfrac{\pimax}{1+\ximin}
	\)
	for any \(k\).
	In particular, \(\tfrac{\ximin}{2\pimax}\leq\tfrac{\ximin\rk}{\pimax}\leq\frac{\ximin}{1+\ximin}<1\) holds for every \(k\).
\end{lemma}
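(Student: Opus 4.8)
The plan is to read off both inequalities directly from the defining relation \(\rk=\frac{\pik}{1+\xik}\) together with the feasibility constraints that \cref{state:pinuxik} and the initialization place on \(\pik\) and \(\xik\), and then to obtain the ``in particular'' chain by a single positive rescaling. There is no analytic content beyond elementary manipulation of these constraints, so I expect the whole argument to be short.

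First I would establish the upper bound. Since \(\pik\geq0\) and \(1+\xik\geq1+\ximin>0\) (recall \(\ximin>0\)), I can bound the numerator from above by \(\pik\leq\pimax\) and the denominator from below by \(\xik\geq\ximin\), both of which hold for every \(k\) by construction. This gives the chain \(\rk=\frac{\pik}{1+\xik}\leq\frac{\pimax}{1+\xik}\leq\frac{\pimax}{1+\ximin}\) at once. For the lower bound \(\rk\geq\frac12\), the inequality is imposed \emph{verbatim} by the selection rule for \(k\geq1\), since \cref{state:pinuxik} requires \(\rk*=\frac{\pik*}{1+\xik*}\geq\frac12\) at each step. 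The only point needing a word of care is the base case \(k=0\): the initial data \(\rp_0\in[\pimin,\pimax]\), \(\xi_0\geq\ximin\) are taken to satisfy the same requirement \(r_0\geq\frac12\), which is consistent precisely because the standing condition \(\ximin\leq2\pimin-1\) is equivalent to \(\frac{\pimin}{1+\ximin}\geq\frac12\) and hence certifies a nonempty feasible set for \((\rp_0,\xi_0)\).

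Finally, the displayed ``in particular'' statement follows by multiplying the chain \(\frac12\leq\rk\leq\frac{\pimax}{1+\ximin}\) through by the positive constant \(\frac{\ximin}{\pimax}\): the left end becomes \(\frac{\ximin}{2\pimax}\), the right end becomes \(\frac{\ximin}{\pimax}\cdot\frac{\pimax}{1+\ximin}=\frac{\ximin}{1+\ximin}\), and the strict bound \(\frac{\ximin}{1+\ximin}<1\) is immediate from \(\ximin<1+\ximin\). I do not anticipate any genuine obstacle; the lemma is elementary, and the only bookkeeping subtlety is propagating the lower bound through the initialization step, as noted above.
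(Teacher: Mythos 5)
Your argument is correct and takes essentially the same route as the paper's own proof: the lower bound \(\rk\geq\tfrac12\) is read off verbatim from the selection rule, the upper bound follows from \(\pik\leq\pimax\) and \(\xik\geq\ximin\) applied to \(\rk=\tfrac{\pik}{1+\xik}\), and the ``in particular'' chain is just rescaling by \(\tfrac{\ximin}{\pimax}>0\). One minor inaccuracy in your base-case remark: the initialization only imposes \(\rp_0\in[\pimin,\pimax]\) and \(\xi_0\geq\ximin\), which does \emph{not} by itself force \(r_0\geq\tfrac12\) (take \(\xi_0\) large); the condition \(\ximin\leq2\pimin-1\) only certifies that such a choice is \emph{possible} --- but this is immaterial, since the lemma (like the paper's proof) concerns only the parameters selected at \cref{state:pinuxik}, i.e.\ \(k\geq1\), where the constraint \(\rk\geq\tfrac12\) is explicitly enforced.
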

\begin{proof}
	The lower bound \(\rk\geq\frac12\) is enforced at \cref{state:pinuxik}.
	As for the upper bound, the requirements \(\xik\geq\ximin\) and \(\pik\leq\pimax\) yield that
	\(
		\rk
	\defeq
		\tfrac{\pik}{1+\xik}
	\leq
		\tfrac{\pimax}{1+\ximin}
	\).
\end{proof}
}%

\begin{theorem}[convergence of \cref{alg:PG:pinuxik}]\label{thm:PG:pinuxik}%
	Under \cref{ass:PG:P}, the sequence \(\seq{x^k}\) generated by \cref{alg:PG:pinuxik} converges to a solution \(x^\star\in\argmin\varphi\) and \(\seq{\U_k(x^\star)}\searrow0\).
	Moreover, there exists
	\(
		k_0
	\leq
		2\bigl\lceil\log_{1+\frac{1}{\pimax}}\bigl(\tfrac{1}{\gamma_0L_{f,\V}}\bigr)\bigr\rceil_+
	\)
	such that
	\[
		\gamk
	\geq
		\gamma_{\rm min}
	\coloneqq
		\m(\nicefrac{1}{\pimax})
		\sqrt{\tfrac{\ximin\rmin}{\pimax}}
		\tfrac{1}{L_{f,\V}}
	\quad
		\forall k\geq k_0,
	\]
	where \(\rmin\coloneqq\inf_{k\in\N}\rk\geq\frac{1}{2}\), \(\m({}\cdot{})\) is as in \cref{def:meps} (see also \eqref{eq:meps=}), and \(L_{f,\V}\) is a Lipschitz modulus for \(\nabla f\) on a compact convex set \(\V\) that contains \(\seq{x^k}\).
\end{theorem}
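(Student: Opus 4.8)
The plan is to split the statement into its two assertions---convergence of \(\seq{x^k}\) with \(\seq{\U_k(x^\star)}\searrow0\), and the quantitative stepsize lower bound---and to reduce the former to \cref{thm:PG:cvg} once the latter is in place. First I would verify that the update in \cref{state:pinuxik:gamk*} is exactly calibrated so that \(\rhok*^2=\gamk*^2/\gamk^2\) equals the minimum of the two quantities in \eqref{eq:PG:rhokUB}: substituting \(\rk*=\frac{\pik*}{1+\xik*}\) turns the second term of \cref{state:pinuxik:gamk*} into the second bound of \eqref{eq:PG:rhokUB}, while the first terms match verbatim. Hence \cref{thm:PG:ineq} applies (with \(\oldnu_k\equiv1\) and any \(x^\star\in\argmin\varphi\)), giving \(\U_{k+1}(x^\star)\leq\U_k(x^\star)\), so \(\seq{\U_k(x^\star)}\) converges and \(\seq{x^k}\) is bounded. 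The constraints in \cref{state:pinuxik} further guarantee \(\pik*\leq\pik+\one_{\gamk\lk\geq1}\leq1+\pik\), \(\pik\in[\pimin,\pimax]\), and---via \(\rk*\geq\frac12\)---\(\xik\in[\ximin,2\pimax-1]\), so \(\seq{\pik}\) and \(\seq{\xik}\) are bounded and bounded away from zero. Thus the entire convergence statement follows from the second part of \cref{thm:PG:cvg} the moment one shows \(\inf_k\gamk>0\); indeed the explicit bound below yields \(\inf_k\gamk\geq\min\set{\gamma_0,\dots,\gamma_{k_0-1},\gamma_{\rm min}}>0\).

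\emph{Setup and one-step estimates.} Boundedness places \(\seq{x^k}\) in a compact convex \(\V\), on which \(\nabla f\) admits a modulus \(L_{f,\V}\), whence \(\lk\leq L_k\leq L_{f,\V}\) by \eqref{eq:lk<=ck}. Writing \(\varepsilon\coloneqq\nicefrac1{\pimax}\) and \(\bar\gamma\coloneqq\sqrt{\ximin\rmin/\pimax}\,\tfrac1{L_{f,\V}}\), so that \(\gamma_{\rm min}=\m(\varepsilon)\bar\gamma\), I would record three facts about a single update. (i) If \(\gamk\lk\geq1\) then \(\gamk\geq\nicefrac1{\lk}\geq\nicefrac1{L_{f,\V}}>\bar\gamma\), using \(\ximin\rmin/\pimax<1\), which follows since \(\rk=\frac{\pik}{1+\xik}\leq\frac{\pimax}{1+\ximin}\) gives \(\ximin\rmin/\pimax\leq\frac{\ximin}{1+\ximin}<1\). (ii) Whenever the second term realizes the minimum in \cref{state:pinuxik:gamk*}, the computation \eqref{eq:lb:temp} gives \(\gamk*\geq\sqrt{\ximin\rmin/\pimax}/L_k\geq\bar\gamma\). (iii) Whenever \(\gamk\lk<1\) and the first term is active, the bound \(\pik*\leq\pik\) (enforced at \cref{state:pinuxik}) forces \(\rhok*\geq\sqrt{\tfrac{1+\pik\rhok}{\pik*}}\geq\sqrt{\varepsilon+\rhok}\), the very recursion governing \cref{def:meps}. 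A consequence I would stress is that \(\gamk<\bar\gamma\) implies \(\gamk\lk<1\), so the sub-\(\bar\gamma\) regime is entirely unsaturated.

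\emph{Controlling excursions below \(\bar\gamma\) and counting \(k_0\).} Combining (ii)--(iii), any maximal run of consecutive indices with \(\gamk<\bar\gamma\) is entered from a value \(\gamma_j\geq\bar\gamma\) through a first-term step with \(\rhok*\geq\sqrt\varepsilon\), and thereafter obeys \(\rhok*\geq\sqrt{\varepsilon+\rhok}\). Comparing termwise with the auxiliary sequence of \cref{def:meps} (which starts at \(\sqrt\varepsilon\) and strictly increases past \(1\) after \(t_\varepsilon\) steps), the run has at most \(t_\varepsilon\) decreasing steps and the product of the \(\rhok\) down to the minimum is at least \(\m(\varepsilon)\); hence the stepsize never drops below \(\gamma_j\,\m(\varepsilon)\geq\bar\gamma\,\m(\varepsilon)=\gamma_{\rm min}\) during the run, after which it climbs back above \(\bar\gamma\). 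It follows that from the first index \(k_1\) at which \(\gamk\geq\bar\gamma\) onward, \(\gamk\geq\gamma_{\rm min}\). Finally I would bound \(k_1\): starting from \(\gamma_0\) with \(\rho_0=1\), as long as \(\gamk<\bar\gamma\) (hence unsaturated) an induction using (iii) yields \(\rhok*\geq\sqrt{1+\varepsilon}\), so \(\gamk\geq\gamma_0(1+\varepsilon)^{k/2}\); this reaches \(\bar\gamma\) within \(2\log_{1+\varepsilon}(\bar\gamma/\gamma_0)\leq2\lceil\log_{1+\nicefrac1{\pimax}}(\tfrac1{\gamma_0L_{f,\V}})\rceil_+\) steps, which furnishes the claimed \(k_0\).

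\emph{Main obstacle.} The delicate point is the excursion bookkeeping: proving that the cumulative contraction during a recovery is no worse than \(\m(\varepsilon)\) requires the termwise domination over the \cref{def:meps} recursion together with the observation that the minimum is attained precisely when \(\rho\) crosses \(1\) (so the subsequent increasing steps are irrelevant), and it hinges on the non-obvious fact that \(\gamk<\bar\gamma\) precludes saturation, so that the clean recursion \(\rhok*\geq\sqrt{\varepsilon+\rhok}\) is available throughout the run. Everything else is either a direct invocation of \cref{thm:PG:ineq,thm:PG:cvg} or the elementary one-step estimates (i)--(iii).
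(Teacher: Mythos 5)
Your proposal is correct and follows the paper's proof essentially step for step: the reduction of the convergence claim to \cref{thm:PG:cvg} by checking the parameter bounds enforced at \cref{state:pinuxik}, and then the same two-phase stepsize analysis---geometric growth \(\rhok*\geq\sqrt{1+\nicefrac{1}{\pimax}}\) until the threshold \(\sqrt{\ximin\rmin/\pimax}/L_{f,\V}\) is first reached (which bounds \(k_0\)), followed by the excursion bookkeeping below the threshold via the recursion \(\rhok*\geq\sqrt{\nicefrac{1}{\pimax}+\rhok}\) compared against \cref{def:meps}---with your facts (i)--(iii) playing exactly the role of \cref{thm:PG:gamk*:lb1} and \eqref{eq:lb:temp}. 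The only caveat is that your fact (ii) is stated without the hypothesis \(\gamk\lk<1\) under which \eqref{eq:lb:temp} is actually derived, but since you invoke it only to rule out a second-term step at the entry into a sub-threshold run---precisely the junction where the paper's own proof (``in particular \(\rho_{k+1}\geq\sqrt{\nicefrac{1}{\pimax}}\)'') makes the same tacit assumption---your argument is faithful to, and no less rigorous than, the published one.
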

\begin{proof}
	The conditions prescribed in \cref{state:pinuxik} entail that the requirements of \cref{thm:PG:x*} are met, so that the proof reduces to showing the claimed lower bound on \(\seq{\gamk}\).
	Boundedness of the sequence \(\seq{x^k}\) established in \cref{thm:PG:ineq} ensures the existence of \(L_{f,\V}>0\) as in the statement.
	In particular, recall that \(\lk\leq L_k\leq L_{f,\V}\) holds for all \(k\in\N\), cf. \eqref{eq:lk<=ck}.
	\Cref{thm:PG:gamk*:lb1} then yields that
	\begin{equation}\label{eq:PG:alphk<1}
		\gamk\lk<1
	\quad\Rightarrow\quad
		\gamk*
	\geq
		\min\set{
			\gamk
			\sqrt{
				\tfrac{1}{\pimax}+\rhok
			},
			~
			\sqrt{
				\tfrac{\ximin\rmin}{\pimax}
			}
			\tfrac{1}{L_{f,\V}}
		}.
	\end{equation}
	We first show that \(\gamma_{k_0}L_{f,\V}\geq\sqrt{\frac{\ximin\rmin}{\pimax}}\) holds for some \(k_0\geq0\) upper bounded as in the statement.
	To this end, suppose that \(\gamk L_{f,\V}<\sqrt{\frac{\ximin\rmin}{\pimax}}\) for \(k=0,1,\dots,K\).
	The bounds \(\xik\geq\ximin\) and \(\pik\leq\pimax\) enforced in \cref{state:pinuxik} imply that
	\(
		\tfrac12\leq \rmin\leq\rk\leq\tfrac{\pimax}{1+\ximin}
	\)
	for any \(k\).
	In particular, \(\tfrac{\ximin\rmin}{\pimax}\leq\tfrac{\ximin\rk}{\pimax}\leq\frac{\ximin}{1+\ximin}<1\) holds for every \(k\).
	Then, \(\gamk \lk\leq \gamk L_{f,\V}<\sqrt{\frac{\ximin\rmin}{\pimax}}<1\), and \eqref{eq:PG:alphk<1} hold true for all such \(k\), leading to \(\gamk* \geq  \gamk \sqrt{\nicefrac{1}{\pimax}+ \rhok}\) for \(k=0,\dots,K-1\).
	Since \(\rho_0\geq1\), it follows that \(\rhok*=\nicefrac{\gamk*}{\gamk}\geq \sqrt{\nicefrac{1}{\pimax}+1}\) for \(k =0,\ldots, K-1\). Thus,
	\[
		1
	>
		\tfrac{a\rmin}{\pimax}
	>
		(\gamma_KL_{f,\V})^2
	\geq
		\bigl(1+\tfrac{1}{\pimax}\bigr)
		(\gamma_{K-1}L_{f,\V})^2
	\geq\dots\geq
		\bigl(1+\tfrac{1}{\pimax}\bigr)^K
		(\gamma_0L_{f,\V})^2,
	\]
	from which the existence of \(k_0\) bounded as in the statement follows.

	Let \(k\geq k_0\) be an index such that \(\gamk L_{f,\V}\geq\sqrt{\tfrac{\ximin\rmin}{\pimax}}\), and suppose that \(\gamma_{k+t}L_{f,\V}<\sqrt{\tfrac{\ximin\rmin}{\pimax}}\) for \(t = 1,\ldots, T\).
	As before, the inequalities in \eqref{eq:PG:alphk<1} hold true for all such iterates, leading to
	\[
		\rho_{k+t}
	\geq
		\sqrt{
			\tfrac{1}{\pimax}
			+
			\rho_{k+t-1}
		},
	~~
		t=1,\dots,T+1,
	\quad\text{and in particular}\quad
		\rho_{k+1}\geq\sqrt{\tfrac{1}{\pimax}}.
	\]
	It then follows from the definition of \(\m(\varepsilon)\) and \(t_\varepsilon\) as in \cref{def:meps} with \(\varepsilon=\frac{1}{\pimax}\) that \(\gamma_{k+t}=\gamma_{k+t-1}\rho_{k+t}\) can only decrease for at most \(t\leq t_\varepsilon\) iterations (that is, \(T\leq t_\varepsilon\)), at the end of which
	\[
		\textstyle
		\gamma_{k+t}
	=
		\bigl(
			\prod_{i=1}^{t}\rho_{k+i}
		\bigr)
		\gamk
	\geq
		\m(\tfrac{1}{\pimax})
		\gamk
	\geq
		\m(\tfrac{1}{\pimax})
		\sqrt{\tfrac{\ximin\rmin}{\pimax}}
		\tfrac{1}{L_{f,\V}}
	\defeq
		\gamma_{\rm min},
	\]
	and then increases linearly up to when it is again larger than \(\sqrt{\tfrac{\ximin\rmin}{\pimax}}\frac{1}{L_{f,\V}}\), proving that \(\gamk\geq\gamma_{\rm min}\) holds for all \(k\geq k_0\).
\end{proof}

 	\section{A class of adaptive alternating minimization algorithms}
Leveraging an interpretation of {\algnamefont AMA} as the dual of the proximal gradient method, an adaptive variant is developed for solving \eqref{eq:CP} under the following assumptions.

\begin{assumption}[requirements for problem \eqref{eq:CP}]\label{ass:AMA}~
		\begin{enumeratass}[%
			label={A\(\text{\oldstylenums{\arabic*}}^{\hspace{-0.5pt}*}\)},
			ref={\ref{ass:AMA}.A\(\text{\oldstylenums{\arabic*}}^{\hspace{-0.5pt}*}\)},
		]
		\item \label{ass:psi1}%
			\(\func{\psi_1}{\R^n}{\Rinf}\) is proper, closed, locally strongly convex, and 1-coercive;
		\item
			\(\func{\psi_2}{\R^m}{\Rinf}\) is proper, convex and closed;
		\item \label{ass:relint}%
			\(A\in\R^{m\times n}\) and there exists \(x\in \relint \dom \psi_1\) such that \(Ax \in \relint \psi_2\).
		\end{enumeratass}
\end{assumption}

Under these requiremets, problem \eqref{eq:CP} admits a unique solution \(x^\star\), and by virtue of \cite[Thm.s 23.8, 23.9, and Cor. 31.2.1]{rockafellar1970convex} also its dual
\[\tag{D}\label{eq:D}
	\minimize_{y\in\R^m}\conj \psi_1(-\trans Ay)+\conj \psi_2(y)
\]
has solutions \(y^\star\) characterized by
\(
	y^\star\in\partial \psi_2(Ax^\star)
\)
and
\(
	-\trans Ay^\star\in\partial \psi_1(x^\star)
\),
and strong duality holds.
In fact, \Cref{ass:psi1} ensures that the conjugate \(\conj{\psi_1}\) is a (real-valued) locally Lipschitz differentiable function \cite[Thm. 4.1]{goebel2008local}.
We note that the weaker notion of local strong monotonicity of \(\partial\psi_1\) relative to its graph would suffice, and that this minor departure from the reference is used for simplicity of exposition.
Problem \eqref{eq:D} can then be addressed with proximal gradient iterations
\(
	y^+
=
	\prox_{\gamma g}(y-\gamma\nabla f(y))
\)
analized in the previous section, with \(f\coloneqq\conj \psi_1(-\trans Ay)\) and \(g\coloneqq\conj \psi_2\).
In terms of primal variables \(x\) and \(z\), these iterations result in the alternating minimization algorithm.
We here reproduce the simple textbook steps.
First, observe that
\[
	x=\nabla\conj \psi_1(-\trans Ay)
\quad\Leftrightarrow\quad
	-\trans Ay\in\partial \psi_1(x)
\quad\Leftrightarrow\quad
	0\in\trans Ay+ \partial \psi_1(x)
	=
	\partial[\innprod{A{}\cdot{}}{y}+ \psi_1](x).
\]
Hence, by strict convexity,
\(
	x
	=
	\argmin\set{\psi_1+\innprod{A{}\cdot{}}{y}}
\).
By the Moreau decomposition,
\[
	\prox_{\gamma g}(y-\gamma\nabla f(y))
=
	\prox_{\gamma\conj \psi_2}(y+\gamma Ax)
=
	y+\gamma Ax-\gamma\prox_{\nicefrac{\psi_2}{\gamma}}(\gamma^{-1}y+Ax).
\]
{\algnamefont AMA} iterations thus generate a sequence \(\seq{y^k}\) given in \eqref{eq:AMA}, where
\[
	\Lagr_\gamma(x,z,y)
\coloneqq
	\psi_1(x)+\psi_2(z)+\innprod{y}{Ax-z}+\tfrac{\gamma}{2}\|Ax-z\|^2
\]
is the \(\gamma\)-augmented Lagrangian associated to \eqref{eq:CP}.

\begin{myalg}[alg:adaAMA]{{\algnamefont AdaAMA}\(^{\rp,r}\)}%
\begin{tabular}[t]{@{}l@{}}
	Fix \(y^{-1}\in\R^m\) and \(\gamma_0=\gamma_{-1}>0\).
	With \(\lk\) and \(L_k\) as in \eqref{eq:AMA:CL},
	starting from
\\[2pt]
	\hfill\(
		\begin{cases}[r @{{}={}} l@{~}l]
			x^{-1}
			&
			\argmin_{x\in \R^n}\set{\psi_1(x)+\innprod{y^{-1}}{Ax}}
		\\
			z^0
			&
			\argmin_{z\in \R^m}\Lagr_{\gamma_0}(x^{-1},z,y^{-1})
		\\
			y^0
			&
			y^{-1}+\gamma_0(Ax^{-1}-z^0),
		\end{cases}
	\)\hfill
\\[2pt]
	iterate for \(k = 0, 1, \dots\)
\end{tabular}
\begin{subequations}\label{eq:AMA}
	\begin{align}%
		x^k
	={} &
		\argmin_{x\in \R^n}\set{\psi_1(x)+\innprod{y^k}{Ax}}
		\qquad\text{\gray({\footnotesize\({}= \argmin_{x\in \R^n}\Lagr_0(x,z^k,y^k)\)})}
	\\
		\gamk*
	={} &
		\textstyle
		\gamk\min\set{
			\sqrt{%
				\frac{1}{\rp}+\frac{\gamk}{\gamma_{k-1}}
				\vphantom{
					\frac{
						\frac{r}{\rp}
					}{
						\bigl[
							(1-2r)
							+
							\gamk^2 L_k^2
							+
							2\gamk\lk(r-1)
						\bigr]_+
					}
				}
			},
		~
			\sqrt{
				\frac{
					1 - \frac{r}{\rp}
				}{
					\left[
						(1-2r)
						+
						\gamk^2 L_k^2
						+
						2\gamk\lk(r-1)
					\right]_+
				}
			}
		}
	\\
		z^{k+1}
	={} &
		\prox_{\nicefrac{\psi_2}{\gamk*}}(\gamk*^{-1}y^k+Ax^k)
		\qquad\text{\gray({\footnotesize\({}= \argmin_{z\in \R^m}\Lagr_{\gamk*}(x^k,z,y^k)\)})}
	\\
		y^{k+1}
	={} &
		y^k+\gamk*(Ax^k-z^{k+1})
	\end{align}
\end{subequations}
\end{myalg}

The chosen iteration indexing reflects the dependency on the stepsize \(\gamk\): \(x^k\) depends on \(y^k\) but \emph{not} on \(\gamk*\), whereas \(z^{k+1}\) does depend on it.
Moreover, this convention is consistent with the relation
\(
	\nabla f(y^k)=-Ax^k
\).
Local Lipschitz estimates of \(\nabla  f\) as in \eqref{eq:lL} are thus expressed as
\begin{equation}\label{eq:AMA:CL}
	\lk
=
	-\frac{
		\innprod{Ax^k-Ax^{k-1}}{y^k-y^{k-1}}
	}{
		\|y^k-y^{k-1}\|^2
	}
\quad\text{and}\quad
	L_k
=
	\frac{
		\|Ax^k-Ax^{k-1}\|^2
	}{
		\|y^k-y^{k-1}\|^2
	}.
\end{equation}
Being dually equivalent algorithms, convergence of \refadaAMA\ is deduced from that of \refadaPG.

\begin{theorem}
	Under \cref{ass:AMA}, for any \(\rp>r\geq\frac12\) the sequence \(\seq{x^k}\) generated by \refadaAMA\ converges to the (unique) primal solution of \eqref{eq:CP}, and \(\seq{y^k}\) to a solution of the dual problem \eqref{eq:D}.
\end{theorem}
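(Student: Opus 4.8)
The plan is to invoke the dual equivalence established in the paragraphs preceding the statement, reducing everything to the already-proven \cref{thm:PG:simplified}. Indeed, the iterates \(\seq{y^k}\) produced by \refadaAMA\ are precisely the proximal gradient iterates \eqref{eq:PG} applied to the dual \eqref{eq:D}, with \(f(y)=\conj\psi_1(-\trans Ay)\) and \(g=\conj\psi_2\); moreover the stepsize recursion of \refadaAMA\ is identical to that of \refadaPG\ once the Lipschitz surrogates \eqref{eq:lL} are rewritten through the relation \(\nabla f(y^k)=-Ax^k\), which is exactly the primal form \eqref{eq:AMA:CL}. The first task is therefore to certify that this dual pair \((f,g)\) meets \cref{ass:PG:P}.

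For this, under \cref{ass:AMA} the conjugate \(\conj\psi_1\) is real-valued with locally Lipschitz gradient by \cite[Thm. 4.1]{goebel2008local} (this is where local strong convexity and \(1\)-coercivity of \(\psi_1\) are used), and precomposition with the linear map \(y\mapsto-\trans Ay\) preserves convexity and local Lipschitz differentiability of the gradient, giving the first requirement; since \(\psi_2\) is proper, closed and convex, \(g=\conj\psi_2\) is proper, lsc and convex by Fenchel--Moreau, giving the second; and the constraint qualification in \cref{ass:AMA}, together with the strong duality invoked via \cite{rockafellar1970convex} before the statement, guarantees the existence of a dual minimizer, giving the third. With \cref{ass:PG:P} in force, \cref{thm:PG:simplified} applies for every \(\rp>r\geq\frac12\) and yields convergence of \(\seq{y^k}\) to some dual solution \(y^\star\) of \eqref{eq:D}.

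It then remains to transfer this to the primal iterates. I would use the identity \(x^k=\nabla\conj\psi_1(-\trans Ay^k)\), which is the equivalence \(x=\argmin\set{\psi_1+\innprod{A{}\cdot{}}{y}}\Leftrightarrow x=\nabla\conj\psi_1(-\trans Ay)\) derived in the text. As \(\nabla\conj\psi_1\) is continuous (being locally Lipschitz) and \(y\mapsto-\trans Ay\) is linear, passing to the limit along \(y^k\to y^\star\) gives \(x^k\to\nabla\conj\psi_1(-\trans Ay^\star)\). Finally, the optimality characterization \(-\trans Ay^\star\in\partial\psi_1(x^\star)\) of the dual solution recorded before the statement identifies \(\nabla\conj\psi_1(-\trans Ay^\star)\) with the unique primal solution \(x^\star\) of \eqref{eq:CP}, which completes the argument.

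The main obstacle is not a hard estimate but the careful bookkeeping of the dual dictionary: one must verify that the substitution \(\nabla f(y^k)=-Ax^k\) turns \eqref{eq:lL} into the primal surrogates used by \refadaAMA, so that the two stepsize recursions coincide iterate-by-iterate and the dual iterates are \emph{literally} the sequence to which \cref{thm:PG:simplified} speaks, and that the hypotheses of \cref{ass:AMA} are exactly what \cite{goebel2008local} needs to promote \(\conj\psi_1\) to a finite-valued function with locally Lipschitz gradient. Once these identifications are in place, both the primal and dual convergence claims follow as immediate corollaries.
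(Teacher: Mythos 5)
Your proposal is correct and takes essentially the same route as the paper: the paper's proof \emph{is} this dual-equivalence reduction, with the identification of the \refadaAMA{} iterates \(\seq{y^k}\) as \refadaPG{} iterates for \(f(y)=\conj\psi_1(-\trans Ay)\), \(g=\conj\psi_2\), the verification of \cref{ass:PG:P} via \cite[Thm. 4.1]{goebel2008local} and \cite[Thm.s 23.8, 23.9, and Cor. 31.2.1]{rockafellar1970convex}, and the primal recovery \(x^k=\nabla\conj\psi_1(-\trans Ay^k)\) all carried out in the text preceding the theorem, after which \cref{thm:PG:simplified} applies. Your write-up simply makes explicit the bookkeeping and the continuity argument for \(x^k\to x^\star\) that the paper compresses into the one-line remark that the two algorithms are ``dually equivalent.''
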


	\section{Numerical simulations}\label{sec:simulations}
Performance of \refadaPG{} with five different parameter choices from \cref{tab:nu1} is reported through a series of experiments on
(i) logistic regression,
(ii) cubic regularization for logistic loss,
(iii) regularized least squares.
The two former simulations use three standard datasets from the LIBSVM library \cite{chang2011libsvm}, while for Lasso synthetic data is generated based on \cite[\S6]{nesterov2013gradient}; for further details the reader is referred to \cite[\S4.1]{latafat2023adaptive} where the same problem setup is used.
When applicable, the following algorithms are included in the comparisons.\footnote{%
	\url{https://github.com/pylat/adaptive-proximal-algorithms-extended-experiments}%
}
\begin{center}
	\begin{tabular}{|>{\algnamefont}l|l|}
	\hline
		PG-ls$^b$    & Proximal gradient method with nonmonotone backtracking

	\\\hline
		Nesterov  & Nesterov's acceleration with constant stepsize $\nicefrac{1}{L_f}$ \cite[\S10.7]{beck2017first}


	\\\hline
		\adaPG    & \cite[Alg. 2.1]{latafat2023adaptive}

	\\\hline
		\adaPG-MM & Proximal extension of \cite[Alg. 1]{malitsky2020adaptive}

	\\\hline
	\end{tabular}
\end{center}

The backtracking procedure in {\algnamefont PG-ls$^b$} is meant in the sense of \cite[\S10.4.2]{beck2017first}, (see also \cite[LS1]{salzo2017variable} and \cite[Alg. 3]{demarchi2022proximal} for the locally Lipschitz smooth case), without enforcing monotonic decrease on the stepsize sequence.
To improve performance, the initial guess for \(\gamk*\) is warm-started as \(b\gamk\), where \(\gamk\) is the accepted value in the previous iteration and \(b\geq1\) is a backtracking factor.
For each simulation we tested all values of \(b\in\set{1,\, 1.1,\,  1.3,\,  1.5,\,  2}\) and only reported the best outcome.

\begin{figure}[t]
	\begin{center}
		\includetikz[width=\linewidth]{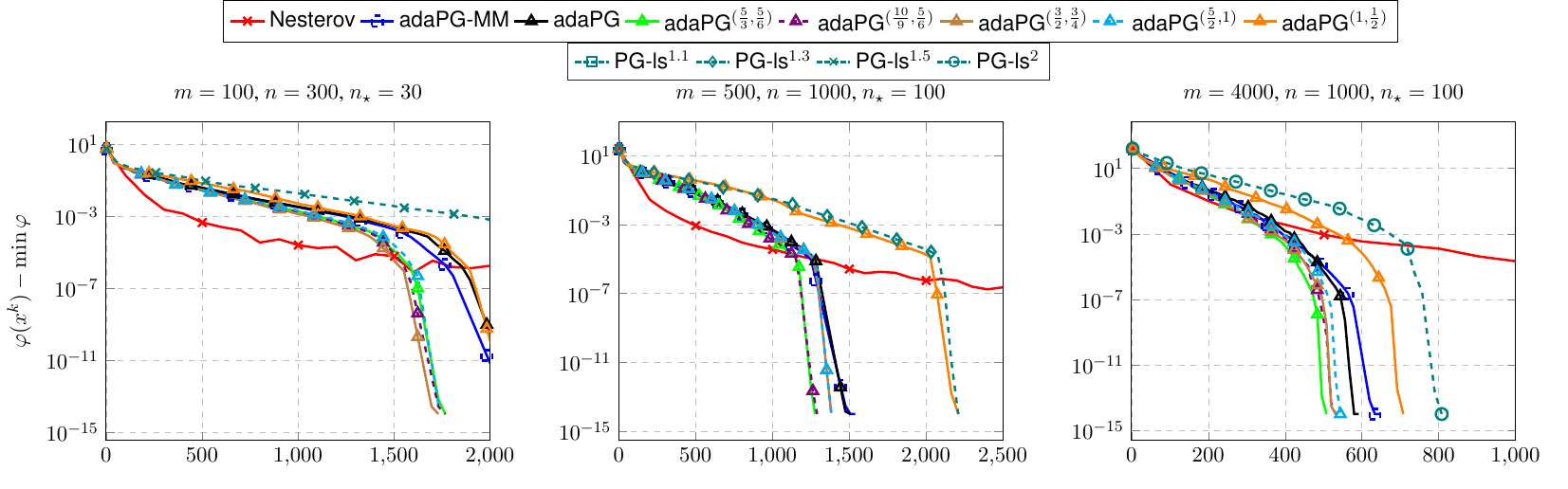}
		\includetikz[width=\linewidth]{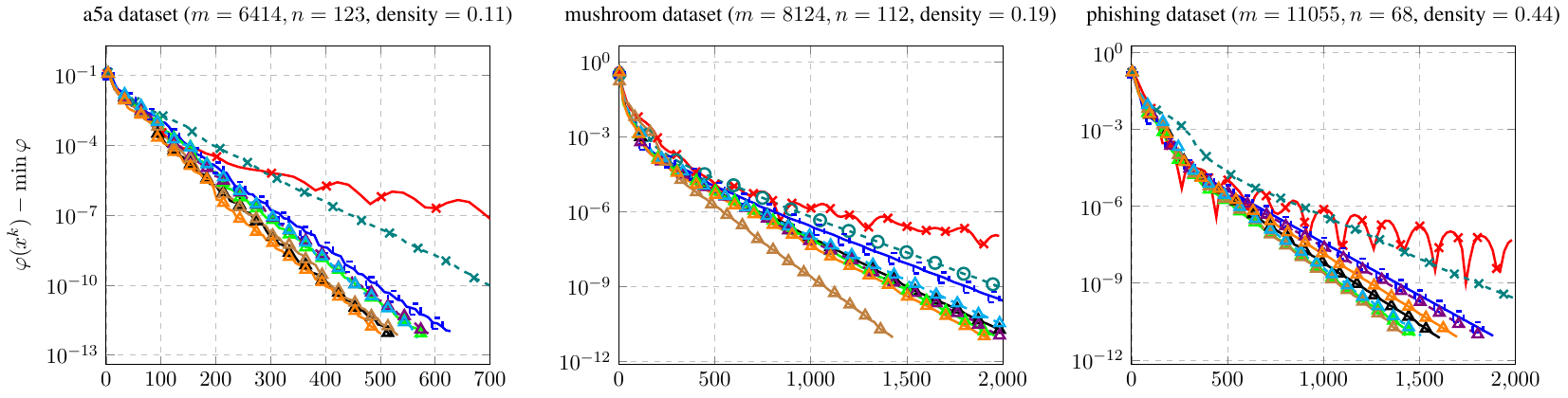}
		\includetikz[width=\linewidth]{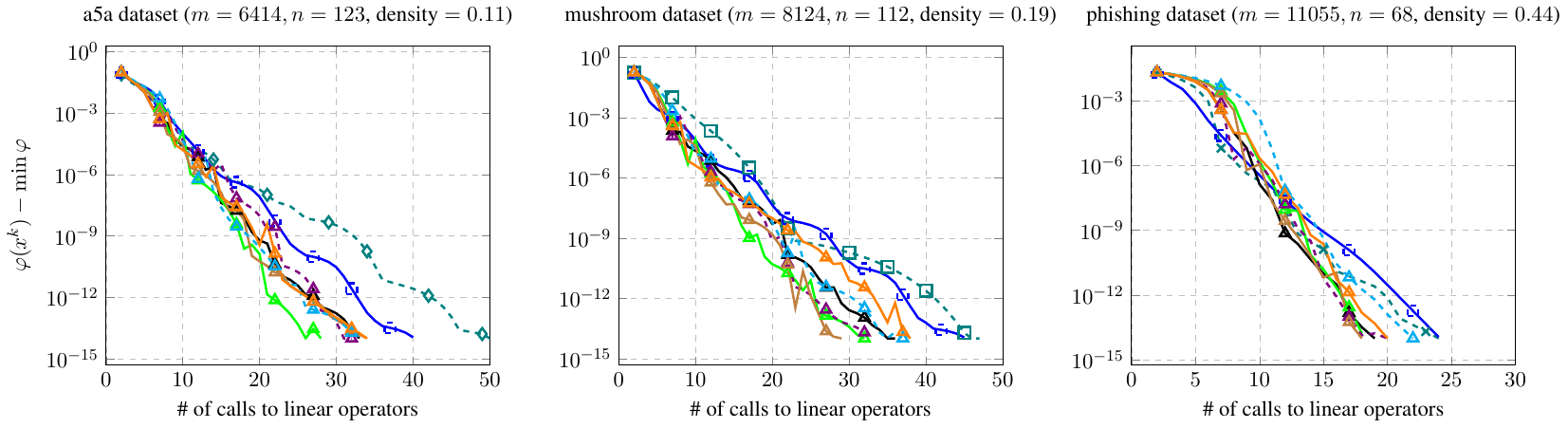}
	\end{center}
	\caption{%
		First row: regularized least squares,
		second row: \(\ell_1\)-regularized logistic regression,
		third row: cubic regularization with Hessian generated for the logistic loss problem evaluated at zero.
		For the linesearch method {\protect\algnamefont PG-ls\(^b\)}, in each simulation only the best outcome for $b\in\protect\set{1,\, 1.1,\,  1.3,\,  1.5,\,  2}$ is reported.
	}%
	\label{fig:PG}%
\end{figure}

	\section{Conclusions}
This paper proposed a general framework for a class of adaptive proximal gradient methods, demonstrating its capacity to extend and tighten existing results when restricting to certain parameter choices.
Moreover, application of the developed method was explored in the dual setting which led to a class of novel adaptive alternating minimization algorithms.

Future research directions include extensions to nonconvex problems, variational inequalities, and simple bilevel optimization expanding upon \cite{malitsky2020golden} and \cite{latafat2023adabim}.
It would also be interesting to investigate the effectiveness of time-varying parameters in our framework for further improving performance and worst-case convergence rate guarantees.

	\ifarticleclass
		\clearpage
		\bibliographystyle{plain}
	\else
		\acks{\TheFunding}%
		\vfill
		\clearpage
		\phantomsection
		\addcontentsline{toc}{section}{References}%
	\fi

	\bibliography{TeX/References.bib}

\begin{thebibliography}{26}
\providecommand{\natexlab}[1]{#1}
\providecommand{\url}[1]{\texttt{#1}}
\expandafter\ifx\csname urlstyle\endcsname\relax
  \providecommand{\doi}[1]{doi: #1}\else
  \providecommand{\doi}{doi: \begingroup \urlstyle{rm}\Url}\fi

\bibitem[Altschuler and Parrilo(2023)]{altschuler2023acceleration}
Jason~M. Altschuler and Pablo~A. Parrilo.
\newblock Acceleration by stepsize hedging {II}: Silver stepsize schedule for
  smooth convex optimization.
\newblock \emph{\arXivLink{2309.16530}}, 2023.

\bibitem[Bauschke and Combettes(2017)]{bauschke2017convex}
Heinz~H. Bauschke and Patrick~L. Combettes.
\newblock \emph{Convex analysis and monotone operator theory in {H}ilbert
  spaces}.
\newblock CMS Books in Mathematics. Springer, 2017.
\newblock ISBN 978-3-319-48310-8.

\bibitem[Beck(2017)]{beck2017first}
Amir Beck.
\newblock \emph{First-Order Methods in Optimization}.
\newblock SIAM, Philadelphia, PA, 2017.

\bibitem[Chang and Lin(2011)]{chang2011libsvm}
Chih-Chung Chang and Chih-Jen Lin.
\newblock {LIBSVM}: A library for support vector machines.
\newblock \emph{ACM Transactions on Intelligent Systems and Technology (TIST)},
  2:\penalty0 1--27, 2011.

\bibitem[De~Marchi and Themelis(2022)]{demarchi2022proximal}
Alberto De~Marchi and Andreas Themelis.
\newblock Proximal gradient algorithms under local {L}ipschitz gradient
  continuity: A convergence and robustness analysis of {PANOC}.
\newblock \emph{Journal of Optimization Theory and Applications}, 194:\penalty0
  771--794, 2022.

\bibitem[Defazio et~al.(2022)Defazio, Zhou, and Xiao]{defazio2022grad}
Aaron Defazio, Baoyu Zhou, and Lin Xiao.
\newblock Grad-{G}rada{G}rad? {A} non-monotone adaptive stochastic gradient
  method.
\newblock \emph{\arXivLink{2206.06900}}, 2022.

\bibitem[Duchi et~al.(2011)Duchi, Hazan, and Singer]{duchi2011adaptive}
John Duchi, Elad Hazan, and Yoram Singer.
\newblock Adaptive subgradient methods for online learning and stochastic
  optimization.
\newblock \emph{Journal of machine learning research}, 12\penalty0 (7), 2011.

\bibitem[Ene et~al.(2021)Ene, Nguyen, and Vladu]{ene2021adaptive}
Alina Ene, Huy~L Nguyen, and Adrian Vladu.
\newblock Adaptive gradient methods for constrained convex optimization and
  variational inequalities.
\newblock In \emph{Proceedings of the AAAI Conference on Artificial
  Intelligence}, volume~35, pages 7314--7321, 2021.

\bibitem[Gabay(1983)]{gabay1983application}
Daniel Gabay.
\newblock Chapter {IX} {A}pplications of the method of multipliers to
  variational inequalities.
\newblock In Michel Fortin and Roland Glowinski, editors, \emph{Augmented
  Lagrangian Methods: Applications to the Numerical Solution of Boundary-Value
  Problems}, volume~15 of \emph{Studies in Mathematics and Its Applications},
  pages 299--331. Elsevier, 1983.

\bibitem[Goebel and Rockafellar(2008)]{goebel2008local}
Rafal Goebel and R~Tyrrell Rockafellar.
\newblock Local strong convexity and local {L}ipschitz continuity of the
  gradient of convex functions.
\newblock \emph{Journal of Convex Analysis}, 15\penalty0 (2):\penalty0 263,
  2008.

\bibitem[Goldstein et~al.(2014)Goldstein, O'Donoghue, Setzer, and
  Baraniuk]{goldstein2014fast}
Tom Goldstein, Brendan O'Donoghue, Simon Setzer, and Richard Baraniuk.
\newblock Fast alternating direction optimization methods.
\newblock \emph{SIAM Journal on Imaging Sciences}, 7\penalty0 (3):\penalty0
  1588--1623, 2014.

\bibitem[Grimmer et~al.(2023)Grimmer, Shu, and Wang]{grimmer2023accelerated}
Benjamin Grimmer, Kevin Shu, and Alex~L Wang.
\newblock Accelerated gradient descent via long steps.
\newblock \emph{\arXivLink{2309.09961}}, 2023.

\bibitem[Ivgi et~al.(2023)Ivgi, Hinder, and Carmon]{ivgi2023dog}
Maor Ivgi, Oliver Hinder, and Yair Carmon.
\newblock {DoG} is {SGD}'s best friend: A parameter-free dynamic step size
  schedule.
\newblock \emph{\arXivLink{2302.12022}}, 2023.

\bibitem[Latafat et~al.(2023{\natexlab{a}})Latafat, Themelis, Stella, and
  Patrinos]{latafat2023adaptive}
Puya Latafat, Andreas Themelis, Lorenzo Stella, and Panagiotis Patrinos.
\newblock Adaptive proximal algorithms for convex optimization under local
  {L}ipschitz continuity of the gradient.
\newblock \emph{\arXivLink{2301.04431}}, 2023{\natexlab{a}}.

\bibitem[Latafat et~al.(2023{\natexlab{b}})Latafat, Themelis, Villa, and
  Patrinos]{latafat2023adabim}
Puya Latafat, Andreas Themelis, Silvia Villa, and Panagiotis Patrinos.
\newblock On the convergence of proximal gradient methods for convex simple
  bilevel optimization.
\newblock \emph{\arXivLink{2305.03559}}, 2023{\natexlab{b}}.

\bibitem[Li and Lan(2023)]{li2023simple}
Tianjiao Li and Guanghui Lan.
\newblock A simple uniformly optimal method without line search for convex
  optimization.
\newblock \emph{\arXivLink{2310.10082}}, 2023.

\bibitem[Li and Orabona(2019)]{li2019convergence}
Xiaoyu Li and Francesco Orabona.
\newblock On the convergence of stochastic gradient descent with adaptive
  stepsizes.
\newblock In \emph{The 22nd International Conference on Artificial Intelligence
  and Statistics}, pages 983--992. PMLR, 2019.

\bibitem[Malitsky(2020)]{malitsky2020golden}
Yura Malitsky.
\newblock Golden ratio algorithms for variational inequalities.
\newblock \emph{Mathematical Programming}, 184\penalty0 (1):\penalty0 383--410,
  2020.

\bibitem[Malitsky and Mishchenko(2020)]{malitsky2020adaptive}
Yura Malitsky and Konstantin Mishchenko.
\newblock Adaptive gradient descent without descent.
\newblock In \emph{Proceedings of the 37th International Conference on Machine
  Learning}, volume 119, pages 6702--6712. PMLR, 13- 2020.

\bibitem[Malitsky and Mishchenko(2023)]{malitsky2023adaptive}
Yura Malitsky and Konstantin Mishchenko.
\newblock Adaptive proximal gradient method for convex optimization.
\newblock \emph{\arXivLink{2308.02261}}, 2023.

\bibitem[Nesterov(2013)]{nesterov2013gradient}
Yurii Nesterov.
\newblock Gradient methods for minimizing composite functions.
\newblock \emph{Mathematical Programming}, 140\penalty0 (1):\penalty0 125--161,
  aug 2013.

\bibitem[Rockafellar(1970)]{rockafellar1970convex}
Ralph~T. Rockafellar.
\newblock \emph{Convex analysis}.
\newblock Princeton University Press, 1970.

\bibitem[Salzo(2017)]{salzo2017variable}
Saverio Salzo.
\newblock The variable metric forward-backward splitting algorithm under mild
  differentiability assumptions.
\newblock \emph{SIAM Journal on Optimization}, 27\penalty0 (4):\penalty0
  2153--2181, 2017.

\bibitem[Tseng(1991)]{tseng1991applications}
Paul Tseng.
\newblock Applications of a splitting algorithm to decomposition in convex
  programming and variational inequalities.
\newblock \emph{SIAM Journal on Control and Optimization}, 29\penalty0
  (1):\penalty0 119--138, 1991.

\bibitem[Ward et~al.(2019)Ward, Wu, and Bottou]{ward2019adagrad}
Rachel Ward, Xiaoxia Wu, and Leon Bottou.
\newblock {A}da{G}rad stepsizes: Sharp convergence over nonconvex landscapes.
\newblock In Kamalika Chaudhuri and Ruslan Salakhutdinov, editors,
  \emph{Proceedings of the 36th International Conference on Machine Learning},
  volume~97 of \emph{Proceedings of Machine Learning Research}, pages
  6677--6686. PMLR, 09- 2019.

\bibitem[Yurtsever et~al.(2021)Yurtsever, Gu, and Sra]{yurtsever2021three}
Alp Yurtsever, Alex Gu, and Suvrit Sra.
\newblock Three operator splitting with subgradients, stochastic gradients, and
  adaptive learning rates.
\newblock \emph{Advances in Neural Information Processing Systems},
  34:\penalty0 19743--19756, 2021.

\end{thebibliography}

	\ifarxiv
		\clearpage
		\begin{appendix}
			\crefalias{section}{appendix}%
			\phantomsection
			\addcontentsline{toc}{section}{Appendix}%

			\proofsection{thm:MMtrick}
				We have
{\ifarticleclass\else\mathtight[0.8]\fi
\begin{align*}
	\innprod{\tfrac{\Hk(x^{k-1})-\Hk(x^k)}{\gamk}}{x^k-x^{k+1}}
={} &
	\innprod{\tfrac{\Hk(x^{k-1})-x^k}{\gamk}+ \nabla f(x^k)}{x^k-x^{k+1}}
\\
={} &
	\tfrac{1}{\gamk*}
	\|x^{k+1} - x^k\|^2
	+
	\innprod{
		x^k - x^{k+1}
	}{
		\underbracket*[0.5pt]{
			\tfrac{\Hk(x^{k-1})-x^k}{\gamk\vphantom{\gamk*}}
		}_{\in\partial g(x^k)}
		-
		\underbracket*[0.5pt]{
			\tfrac{\Hk*(x^k)-x^{k+1}}{\gamk*}
		}_{\in\partial g(x^{k+1})}
		}
\\
\geq{}&
	\tfrac{1}{\gamk*}
	\|x^{k+1} - x^k\|^2,
\end{align*}
}%
owing to monotonicity of \(\partial g\).
Invoking the Cauchy-Schwarz inequality completes the proof.

			\proofsection{thm:PG:ineq}
Following \cite{latafat2023adaptive}, we denote
\(
	\Hk\coloneqq\id-\gamk\nabla f
\),
and remind that the sequence generated by \eqref{eq:PG} is characterized by the inclusion
\begin{equation}
\label{eq:PG:subgrad}
	v^k
\coloneqq
	\tfrac{\Hk(x^{k-1})-x^k}{\gamk}
=
	\tfrac{x^{k-1}-x^k}{\gamk}-\nabla f(x^{k-1})
\in
	\partial g(x^k).
\end{equation}
Then,
{\ifarticleclass\else\mathtight[0.88]\fi
	\begin{align*}
		0
	\leq{} &
		g(x)-g(x^{k+1})
		+
		\innprod{\nabla f(x^k)}{x-x^{k+1}}
		-
		\tfrac{1}{\gamk*}\innprod{x^k-x^{k+1}}{x-x^{k+1}}
	\\
	={} &
		g(x)-g(x^{k+1})
		+
		\underbracket[0.5pt]{
			\innprod{\nabla f(x^k)}{x-x^{k+1}}
		}_{\text{(A)}}
		+
		\ifarticleclass
			\tfrac{1}{2\gamk*}\|x^k-x\|^2
			-
			\tfrac{1}{2\gamk*}\|x^{k+1}-x\|^2
			-
			\tfrac{1}{2\gamk*}\|x^k-x^{k+1}\|^2
		\else
			\tfrac{1}{2\gamk*}
			\left(
				\|x^k-x\|^2
				-
				\|x^{k+1}-x\|^2
				-
				\|x^k-x^{k+1}\|^2
			\right)
		\fi
	\end{align*}
}%
holds for any \(x\).
We next proceed to upper bound the term (A) as
{\mathtight[0.74]%
	\begin{align*}
		\text{(A)}
	={} &
		\innprod{\nabla f(x^k)}{x-x^k}
		+
		\innprod{\nabla f(x^k)}{x^k-x^{k+1}}
	\\
	={} &
		\innprod{\nabla f(x^k)}{x-x^k}
		+
		\tfrac{1}{\gamk}\innprod{\Hk(x^{k-1})-x^k}{x^{k+1}-x^k}
		+
		\tfrac{1}{\gamk}\innprod{\Hk(x^{k-1})-\Hk(x^k)}{x^k-x^{k+1}}
	\\
	\overrel[\leq]{\eqref{eq:PG:subgrad}}{} &
	\ifarticleclass
		\fillwidthof[c]{\innprod{\nabla f(x^k)}{x-x^k}}{
			f(x) - f(x^k)
		}
		+
		\fillwidthof[c]{\tfrac{1}{\gamk}\innprod{\Hk(x^{k-1})-x^k}{x^{k+1}-x^k}}{
			g(x^{k+1})-g(x^k)
		}
		+
		\underbracket[0.5pt]{
			\tfrac{1}{\gamk}\innprod{\Hk(x^{k-1})-\Hk(x^k)}{x^k-x^{k+1}}
		}_{\text{(B)}}.
	\else
			f(x) - f(x^k)
		+
			g(x^{k+1})-g(x^k)
		+
			\underbracket[0.5pt]{
				\tfrac{1}{\gamk}\innprod{\Hk(x^{k-1})-\Hk(x^k)}{x^k-x^{k+1}}
			}_{\text{(B)}}.
	\fi
	\numberthis\label{eq:PG:ineqA}
	\end{align*}
}%
We bound the term (B) by Young's inequality with parameter \(\varepsilon_{k+1}\) as
\begin{align*}
	\text{(B)}
\leq{} &
	\tfrac{\varepsilon_{k+1}}{2\gamk}\|x^k-x^{k+1}\|^2
	+
	\tfrac{1}{2\varepsilon_{k+1}\gamk}\|\Hk(x^{k-1})-\Hk(x^k)\|^2
\\
={} &
	\left(\tfrac{\varepsilon_{k+1}}{2\gamk} - \tfrac{\xik*}{2\gamk*}\right)\|x^k-x^{k+1}\|^2
	+
	\tfrac{\xik*}{2\gamk*}\|x^k-x^{k+1}\|^2
	+
	\tfrac{1}{2\varepsilon_{k+1}\gamk}\|\Hk(x^{k-1})-\Hk(x^k)\|^2
\\
\overrel[\leq]{\tiny\cref{thm:MMtrick}}{} &
	\left(\tfrac{\varepsilon_{k+1}}{2\gamk} - \tfrac{\xik*}{2\gamk*}\right)\|x^k-x^{k+1}\|^2
	+
	\left(\tfrac{1}{2\varepsilon_{k+1}\gamk} + \tfrac{\xik*}{2\gamk*}\rhok*^2 \right)\|\Hk(x^{k-1})-\Hk(x^k)\|^2
\\
={} &
	\left(\tfrac{\varepsilon_{k+1}}{2\gamk} - \tfrac{\xik*}{2\gamk*}\right)\|x^k-x^{k+1}\|^2
	+
	\left(
		\tfrac{1}{2\varepsilon_{k+1}\gamk}
		+
		\tfrac{\xik*}{2\gamk*}\rhok*^2
	\right)
	(1-2\gamk\lk+\gamk^2L_k^2)
	\|x^{k-1}-x^k\|^2,
\end{align*}
where we added and subtracted \(\tfrac{\xik*}{2\gamk*}\|x^k - x^{k+1}\|^2\) in the first identity, and used the relation
\(
	\|\Hk(x^k)-\Hk(x^{k-1})\|^2
=
	(1-2\gamk\lk+\gamk^2L_k^2)
	\|x^k-x^{k-1}\|^2
\),
see \cite[Lem. 2.1(ii)]{latafat2023adaptive}, in the second.
We emphasize once again that the introduction of \(\xik*\) allowed us to apply \cref{thm:MMtrick} on the added term, this being the only departure in this proof from that of \cite[Lem. 2.2]{latafat2023adaptive}.
With \(v^k\in\partial g(x^k)\) as in \eqref{eq:PG:subgrad}, we have that
\begin{align*}
	0
\leq{} &
	\thetk*\left(
		\varphi(x^{k-1})-\varphi(x^k)
		-
		\innprod{\nabla f(x^k)+v^k}{x^{k-1}-x^k}
	\right)
\\
={} &
	\thetk*\left(
		\varphi(x^{k-1})-\varphi(x^k)
		-
		\tfrac{1}{\gamk}\|x^k-x^{k-1}\|^2
		+
		\innprod{\nabla f(x^{k-1})-\nabla f(x^k)}{x^{k-1}-x^k}
	\right)
\\
={} &
	\thetk*\left(
		\varphi(x^{k-1})-\varphi(x^k)
		-
		\tfrac{1-\gamk\lk}{\gamk}\|x^k-x^{k-1}\|^2
	\right).
	\numberthis\label{eq:PG:subgradineq}
\end{align*}
By summing the last two inequalities, multiplying by \(\gamk*\), and observing that
\[
	\varphi(x)-\varphi(x^k)+\thetk*(\varphi(x^{k-1})-\varphi(x^k))
=
	\thetk*P_{k-1}(x)
	-
	(1+\thetk*)P_k(x),
\]
we obtain
\begin{align*}
	&
	\tfrac{1}{2}\|x^{k+1}-x\|^2
	+
	\gamk*(1+\thetk*)P_k(x)
	+
	\tfrac{1 + \xik*-\varepsilon_{k+1}\rhok*}{2}\|x^k-x^{k+1}\|^2
\\
\leq{} &
	\tfrac{1}{2}\|x^k-x\|^2
	+
	\thetk*\gamk*P_{k-1}(x)
\\
&
	+
	\rhok*\left(
		(1-2\gamk\lk+\gamk^2L_k^2)
		\left(
			\tfrac{1}{2\varepsilon_{k+1}}
			+
			\tfrac{\xik*}{2}\rhok*
		\right)
		-
		\thetk*(1-\gamk\lk)
	\right)
	\|x^{k-1}-x^k\|^2.
\end{align*}
Selecting \(\thetk*=\pik*\rhok*\) and \(\varepsilon_{k+1}=\nicefrac{1}{\oldnu_{k+1}\rhok*}\), results in the claimed inequality \eqref{eq:PG:ineq}.

Negativity of the coefficients is precisely what the upper bound on \(\rhok*^2\) enforces, and the monotonic decrease of \(\seq{\U_k(x)}\) then follows.
In turn, boundedness of the sequence follows from the fact that
\(
	\frac12\|x^k-x^\star\|^2
\leq
	\U_k(x^\star)
\leq
	\U_0(x^\star)
\)
holds for every \(k\in\N\) and \(x^\star\in\argmin\varphi\).

			\section{Useful lemmas}
This section contains some results which will be useful in the proof of \cref{thm:PG:cvg} given in \cref{proofthm:PG:cvg}.

{%
	\renewcommand{\rho}{\varrho}%
	\begin{lemma}\label{thm:infrhok>1}%
		Let \(\seq{\pik}\) be a sequence contained in an interval \([\pimin,\pimax]\) with \(\pimin>0\) and such that \(\pik*\leq1+\pik\).
		Starting from some \(\rho_0>0\), let \(\seq{\rhok}\) satisfy the relation \(\rhok*^2\geq\frac{1}{\pik*}(1+\pik\rhok)-\varepsilon_{k+1}\) for every \(k\), where \(0\leq\varepsilon_k\to0\) as \(k \to \infty\).
		Then, \(\liminf_{k\to\infty}\rhok>1\).
	\end{lemma}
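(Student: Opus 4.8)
The plan is to first establish the non-strict bound $\liminf_k\rhok\ge1$ by a limiting argument, and then upgrade it to a strict one by quantifying how the boundedness of $\seq{\pik}$ forces recurrent increases of $\seq{\rhok}$. The starting point is a convenient rewriting of the recursion. Setting $\Delta_k:=1+\pik-\pik*\ge0$ (nonnegative by hypothesis), the identity $\frac{1}{\pik*}=\frac{1}{1+\pik}+\frac{\Delta_k}{\pik*(1+\pik)}$ together with $1+\pik\rhok\ge1$ converts the assumed inequality into
\[
  \rhok*^2
  \;\ge\;
  1+\frac{\pik(\rhok-1)}{1+\pik}+c_1\Delta_k-\varepsilon_{k+1},
  \qquad c_1:=\tfrac{1}{\pimax(1+\pimax)}>0 .
\]
Discarding the nonnegative push term $c_1\Delta_k$ and passing to a subsequence along which $\rhok*\to\ell:=\liminf_k\rhok$ (assume $\ell<\infty$, otherwise we are done; a further subsequence gives $\pik\to\bar\pi\in[\pimin,\pimax]$ and $\rhok\to\bar\rho\ge\ell$, with $\bar\rho$ finite since otherwise $\rhok*\to\infty$), the surviving inequality yields $\ell^2\ge\frac{1+\bar\pi\ell}{1+\bar\pi}$, i.e. $(\ell-1)(\ell+1+\bar\pi\ell)\ge0$. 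As $\ell>0$ (eventually $\rhok*^2\ge\frac1{\pimax}-\varepsilon_{k+1}\ge\frac1{2\pimax}$), the second factor is positive, so $\ell\ge1$.

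Next I would use boundedness to produce recurrent \emph{pushes}. Telescoping the defects gives $\sum_{k=0}^{N-1}\Delta_k=N+\pi_0-\pi_N\ge N-(\pimax-\pimin)$, so on average $\Delta_k\approx1$. More usefully, since $\pik*\ge\pik+(1-\Delta_k)$, a run of $m$ consecutive steps with $\Delta_k\le\tfrac12$ would force $\pik$ to grow by at least $m/2$, which boundedness caps at $m\le2(\pimax-\pimin)=:M$. Hence in every window of $M+1$ consecutive (large) indices there is a push step with $\Delta_k>\tfrac12$, at which the displayed inequality gives $\rhok*^2\ge1+\tfrac{c_1}{2}+\frac{\pik(\rhok-1)}{1+\pik}-\varepsilon_{k+1}$; using $\rhok\ge1-\delta$ (valid eventually, as $\ell\ge1$) the middle term is $\ge-\delta$, so for all large $k$ a push lifts the ratio to $\rhok*\ge1+2b_0$ for a fixed $b_0>0$.

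Between two consecutive push steps there are at most $M$ ordinary steps, and on each the inequality (after dropping $c_1\Delta_k\ge0$) reads $\rhok*^2\ge1+c_2(\rhok-1)-\varepsilon_{k+1}$ with $c_2:=\frac{\pimin}{1+\pimin}\in(0,1)$ whenever $\rhok\ge1$. This is a contraction of $\rhok-1$ toward $0$ at a fixed geometric rate; iterating the associated map at most $M$ times from the post-push value $1+2b_0$ keeps $\rhok-1$ above a strictly positive constant $\delta_0$, provided the finitely many $\varepsilon$'s occurring in the window are small enough — which holds for all large $k$ because $\varepsilon_k\to0$. Since every sufficiently large index lies within $M$ steps of a push, $\rhok\ge1+\delta_0$ for all large $k$, whence $\liminf_k\rhok\ge1+\delta_0>1$.

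The main obstacle is precisely this last quantitative bookkeeping. A single limiting/subsequence argument delivers only $\ell\ge1$, because the comparison recursion $\rho^2=\frac{1+\pi\rho}{1+\pi}$ has fixed point exactly $1$ for every $\pi$; strictness therefore cannot be extracted locally and must come \emph{globally} from the boundedness of $\seq{\pik}$, which simultaneously guarantees recurrent pushes and caps the number of consecutive contraction steps. Making the interplay between the geometric contraction toward $1$, the size of each push, and the vanishing but not necessarily summable $\varepsilon_k$ rigorous is the delicate part.
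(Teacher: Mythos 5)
Your proof is correct, but it establishes the crucial recurrence by a genuinely different mechanism than the paper, so a comparison is worthwhile. Both arguments share the same skeleton: produce indices at which \(\rhok\) exceeds \(1\) by a fixed margin, with uniformly bounded gaps between them, and then show that iterating the recursion across a gap preserves a positive though shrinking margin (your contraction map \(b\mapsto\sqrt{1+c_2b/2}-1\) is in substance the paper's recursion \(\delta_j=\sqrt{1+\tfrac{\pimin}{2\pimax}\delta_{j-1}}-1\)). The difference is in how the boosted indices are produced. The paper supposes a run \(\rho_{\bar k+1},\dots,\rho_{\bar k+t}\le1+\delta\) and unrolls the recursion along it: each step deposits a term of size roughly \(\tfrac{1}{\pimax}(1+2\delta)^{-j}\), and the accumulated geometric sum would exceed \(1+\delta\) for \(t\) large, so runs are bounded; note that this uses only the upper bound \(\pik\le\pimax\), the hypothesis \(\pik*\le1+\pik\) entering solely in the maintenance step. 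You instead isolate the defect \(\Delta_k=1+\pik-\pik*\ge0\) and let boundedness of \(\seq{\pik}\) do the work: a run of steps with \(\Delta_k\le\tfrac12\) forces \(\pik\) to climb by half its length, so pushes with \(\Delta_k>\tfrac12\) recur with an explicit, \(\delta\)-independent frequency of order \(2(\pimax-\pimin)+1\), and each push injects the fixed amount \(c_1/2\) into the squared recursion. The price of your route is the extra preliminary stage \(\liminf_{k\to\infty}\rhok\ge1\) (via subsequences), which is genuinely needed because the push lift requires \(\rhok\ge1-\delta\); the reward is a transparent account of where the growth comes from, namely the failure of \(\pik\) to realize its maximal allowed increase, together with an explicit gap bound. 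As for the bookkeeping you flag as delicate at the end, your stated reason for why it closes is exactly right and not a gap: every window contains at most \(M\) steps, and the push size, contraction factor, and resulting thresholds are fixed positive constants, so a single eventual bound \(\varepsilon_k\le\tau\) suffices (no summability of \(\varepsilon_k\) is needed) --- the same discard-early-iterates device the paper uses. Two trivia to settle in a full write-up: the degenerate case \(\pimax-\pimin<\tfrac12\) (then every step is a push) and rounding of \(M\) to an integer, and the fact that positivity of \(\rhok\) is invoked whenever a bound on \(\rhok^2\) is turned into one on \(\rhok\) (the paper does this implicitly as well, consistent with the interpretation of \(\rhok\) as a stepsize ratio).
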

	\begin{proof}
		We first show that there exist \(\delta>0\) and \(t \geq 1\) such that \(\rhok\geq1+\delta\) holds at least once every \(t\) iterations.
		Up to discarding early iterates, \(\varepsilon_k\leq\frac{1}{2\pimax}\) holds for all \(k\).
		Then,
		\[
			\rhok^2 \geq \tfrac{1}{\pik*}(1+\pik\rhok)-\varepsilon_{k+1} \geq \tfrac{1}{2\pimax}
		\quad
			\forall k,
		\]
		implying that for \(\delta\) small enough \(\rhok-\delta >0\) for all \(k\).
		Fix such a \(\delta>0\), and let \(\bar k\in\N\) and \(t_{\bar k}\geq1\) be such that \(\rho_{\bar k+1},\dots,\rho_{\bar k+t_{\bar k}}\leq1+\delta\).
		Then,
		\(
			\rho_{\bar k+j}-\delta
		\geq
			(\rho_{\bar k+j}-\delta)^2
		\geq
			\rho_{\bar k+j}^2-2\delta\rho_{\bar k+j}
		\),
		hence \(\rho_{\bar k+j}\geq\frac{1}{1+2\delta}\rho_{\bar k+j}^2\), holds for every \(j=1,\dots,t_{\bar k}\), which gives
		\begin{align*}
			1+\delta
		\geq{} \ifarticleclass\else\myampersand\fi
			\rho_{\bar k+t_{\bar k}}
		\geq{} \ifarticleclass\myampersand\fi
			\tfrac{1}{1+2\delta}
			\left(
				\tfrac{1}{\pi_{\bar k+t_{\bar k}}}
				+
				\tfrac{\pi_{\bar k+t_{\bar k}-1}}{\pi_{\bar k+t_{\bar k}}}
				\rho_{\bar k+t_{\bar k}-1}
				-
				\varepsilon_{\bar k+t_{\bar k}}
			\right)
		\\
		\geq{} &
			\tfrac{1}{1+2\delta}
			\left(
				\tfrac{1}{\pi_{\bar k+t_{\bar k}}}
				+
				\tfrac{\pi_{\bar k+t_{\bar k}-1}}{\pi_{\bar k+t_{\bar k}}}
				\tfrac{1}{1+2\delta}
				\Bigl[
					\tfrac{1}{\pi_{\bar k+t_{\bar k}-1}}
					+
					\tfrac{\pi_{\bar k+t_{\bar k}-2}}{\pi_{\bar k+t_{\bar k}-1}}
					\rho_{\bar k+t_{\bar k}-2}
					-
					\varepsilon_{\bar k+t_{\bar k}-1}
				\Bigr]
				-
				\varepsilon_{\bar k+t_{\bar k}}
			\right)
		\\
		={} &
			\tfrac{1}{\pi_{\bar k+t_{\bar k}}}
			\left(
				\tfrac{1}{1+2\delta}
				+
				\tfrac{1}{(1+2\delta)^2}
			\right)
			+
			\tfrac{1}{(1+2\delta)^2}
			\tfrac{\pi_{\bar k+t_{\bar k}-2}}{\pi_{\bar k+t_{\bar k}}}
			\rho_{\bar k+t_{\bar k}-2}
			-
			\bigl(
				\tfrac{1}{1+2\delta}
				\varepsilon_{\bar k+t_{\bar k}}
				+
				\tfrac{1}{(1+2\delta)^2}
				\tfrac{\pi_{\bar k+t_{\bar k}-1}}{\pi_{\bar k+t_{\bar k}}}
				\varepsilon_{\bar k+t_{\bar k}-1}
			\bigr)
		\\
		\cdots\geq{} &
			\tfrac{1}{\pi_{\bar k+t_{\bar k}}}
			\sum_{j=1}^{t_{\bar k}}
			\tfrac{1}{(1+2\delta)^j}
			+
			\tfrac{1}{(1+2\delta)^{t_{\bar k}}}
			\tfrac{\pi_{\bar k}}{\pi_{\bar k+t_{\bar k}}}
			\rho_{\bar k}
			-
			\sum_{j=1}^{t_{\bar k}}
			\tfrac{1}{(1+2\delta)^j}
			\tfrac{\pi_{\bar k+t_{\bar k}+1-j}}{\pi_{\bar k+t_{\bar k}}}
			\varepsilon_{\bar k+t_{\bar k}+1-j}
		\\
		\geq{} &
			\tfrac{1}{\pimax}
			\sum_{j=1}^{t_{\bar k}}
			\tfrac{1-\pimax\varepsilon_{\bar k+t_{\bar k}+1-j}}{(1+2\delta)^j}
		\geq
			\tfrac{1}{2\pimax}
			\sum_{j=1}^{t_{\bar k}}
			\tfrac{1}{(1+2\delta)^j},
		\end{align*}
		where in the last inequality we remind that \(\varepsilon_j\leq\frac{1}{2\pimax}\) for all \(j\).
		By further restricting \(\delta>0\) if necessary, it is apparent that \(t_{\bar k}\) cannot be arbitrarily large, \ie, \(t_{\bar k}\leq\bar t\) for all \(\bar k\in\N\).
		Let \(K=\set{k_1,k_2,\dots}\) be the (infinite) set of all indices \(k\) satisfying \(\rhok>1+\delta\); then,
		\[
			k_{i+1}-k_i\leq\bar t
			\quad
			\forall i.
		\]
		By further discarding early iterates if necessary, we may assume that \(\varepsilon_k\leq\frac{\pimin}{2\pimax}\delta\) holds for every \(k\).
		Then,
		\begin{align*}
			\rho_{k_i+1}
		\geq
			\sqrt{\tfrac{1}{\pi_{k_i+1}}+\tfrac{\pi_{k_i}}{\pi_{k_i+1}}\rho_{k_i}-\varepsilon_{k_i+1}}
		\geq{} &
			\sqrt{\tfrac{1}{\pi_{k_i+1}}+\tfrac{\pi_{k_i}}{\pi_{k_i+1}}(1+\delta)-\varepsilon_{k_i+1}}
		\\
		={} &
		\textstyle
			\sqrt{\underbracket*[0.5pt]{\tfrac{1+\pi_{k_i}}{\pi_{k_i+1}}}_{\geq1}+\tfrac{\pi_{k_i}}{\pi_{k_i+1}}\delta-\varepsilon_{k_i+1}}
		\geq
			\sqrt{1+\tfrac{\pimin}{2\pimax}\delta}
		=
			1
			+
			\delta',
		\end{align*}
		where
		\(
			\delta'
		\coloneqq
			\sqrt{1+\frac{\pimin}{2\pimax}\delta}-1
		\in
			(0,\delta)
		\).
		By the exact same arguments, up to possibly discarding early iterates such that \(\varepsilon_k\leq\frac{\pimin}{2\pimax}\delta'\) holds for every \(k\), we have that
		\[
		\textstyle
			\rho_{k_i+2}
		\geq
			\sqrt{1+\frac{\pimin}{2\pimax}\delta'}
		\eqqcolon
			1+\delta''.
		\]
		Iterating \(\bar t\) times, we have that
		\[
			\rho_{k_i+j}\geq1+\delta_j
		\quad
			\forall i\in\N,\ j=1,\dots,\bar t,
		\]
		where \(\delta_0=\delta\) and
		\(
			\delta_j
		=
			\sqrt{1+\frac{\pimin}{2\pimax}\delta_{j-1}}-1
		\in
			(0,\delta_j)
		\)
		for \(j=1,\dots,\bar t\).
		In particular,
		\[
			\rho_{k_i+j}
		\geq
			1+\delta_j
		\geq
			1+\delta_{\bar t}
		\quad
			\forall j=1,\dots,k_{i+1}-k_i-1
		\]
		and by the definition of \(k_i\), \(i\in\N\), we conclude that \(\rhok\geq1+\delta_{\bar t}>1\) holds for all \(k\) large enough.
	\end{proof}
}%

\begin{lemma}\label{thm:Pkto0}%
	Let \(\varphi=f+g\) where \(f\) is continuously differentiable, and \(g\) is convex and lsc.
	Suppose that a sequence \(\seq{x^k}\) converges to a point \(x^\star\in\argmin\varphi\), and for every \(k\) let \(\bar x^k\coloneqq\prox_{\gamk*g}(x^k-\gamk*\nabla f(x^k))\) with \(\seq{\gamk}\subset\R_{++}\) bounded and bounded away from zero.
	Then, \(\seq{\bar x^k}\) too converges to \(x^\star\).
	Additionally, if either \(\varphi(x^k)\to\min\varphi\) or \(\inf\gamk>0\), then \(\seq{\varphi(\bar x^k)}\to\min\varphi\).
\end{lemma}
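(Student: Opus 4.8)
The plan is to decouple the two assertions: first the convergence of the shadow sequence $\bar x^k$, then the convergence of the costs $\varphi(\bar x^k)$, establishing the latter through the two stated sufficient conditions.

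For $\bar x^k\to x^\star$, the guiding observation is that every minimizer is a fixed point of the proximal gradient map at \emph{any} stepsize: the optimality condition $0\in\nabla f(x^\star)+\partial g(x^\star)$ is equivalent to $x^\star=\prox_{\gamma g}(x^\star-\gamma\nabla f(x^\star))$ for all $\gamma>0$. I would therefore write both $\bar x^k$ and $x^\star$ as images of $\prox_{\gamma_{k+1}g}$ and use its nonexpansiveness to obtain $\|\bar x^k-x^\star\|\le\|x^k-x^\star\|+\gamma_{k+1}\|\nabla f(x^k)-\nabla f(x^\star)\|$. Boundedness (from above) of the stepsizes and continuity of $\nabla f$, together with $x^k\to x^\star$, make both terms vanish. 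Note that only the upper bound on $\seq{\gamk}$ enters here.

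For the costs, lower semicontinuity of $\varphi$ (as $f$ is continuous and $g$ is lsc) together with $\bar x^k\to x^\star$ already gives $\liminf_k\varphi(\bar x^k)\ge\varphi(x^\star)=\min\varphi$, so the crux is the reverse inequality $\limsup_k\varphi(\bar x^k)\le\min\varphi$. Under $\inf_k\gamk>0$, I would use the prox optimality condition $\xi^k\coloneqq\tfrac{x^k-\bar x^k}{\gamma_{k+1}}-\nabla f(x^k)\in\partial g(\bar x^k)$: since $x^k-\bar x^k\to0$, the stepsizes are bounded below, and $\nabla f$ is continuous, $\xi^k\to-\nabla f(x^\star)$ is bounded, whence the subgradient inequality $g(\bar x^k)\le g(x^\star)+\innprod{\xi^k}{\bar x^k-x^\star}$ forces $\limsup_k g(\bar x^k)\le g(x^\star)$; combined with continuity of $f$ this yields $\varphi(\bar x^k)\to\min\varphi$. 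Under the alternative $\varphi(x^k)\to\min\varphi$, I would instead exploit that $\bar x^k$ minimizes $g(\cdot)+\innprod{\nabla f(x^k)}{\cdot-x^k}+\tfrac{1}{2\gamma_{k+1}}\|\cdot-x^k\|^2$, so that, expanding $f(\bar x^k)$ to first order, the linear terms cancel and one is left with $\varphi(\bar x^k)\le\varphi(x^k)-\tfrac{1}{2\gamma_{k+1}}\|\bar x^k-x^k\|^2+r_k$, where $r_k$ collects the first-order remainder of $f$.

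The main obstacle is precisely this last estimate: for large stepsizes there is no sufficient-decrease guarantee, and the quadratic coefficient need not be sign-controlled. The resolution, common to both routes, is that $\bar x^k-x^k\to0$; the negative term is harmless, and the remainder $r_k$, bounded by the modulus of continuity of $\nabla f$ on the compact set $\V$ containing the iterates times $\|\bar x^k-x^k\|$, vanishes. Hence $\limsup_k\varphi(\bar x^k)\le\limsup_k\varphi(x^k)=\min\varphi$, closing the argument.
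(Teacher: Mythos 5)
Your proposal is correct and follows essentially the same route as the paper: convergence of \(\seq{\bar x^k}\) via the fixed-point property of \(x^\star\) and nonexpansiveness of \(\prox_{\gamk*g}\), then the two cases handled through the prox optimality condition \(\tfrac{x^k-\bar x^k}{\gamk*}-\nabla f(x^k)\in\partial g(\bar x^k)\) tested against \(x^\star\) (when \(\inf\gamk>0\)) and against \(x^k\) (when \(\varphi(x^k)\to\min\varphi\)), closed by lower semicontinuity. The only cosmetic difference is that in the latter case the paper applies the subgradient inequality directly to \(g\) and invokes continuity of \(f\), whereas you repackage the same inequality as a descent-type bound on \(\varphi\) with a vanishing first-order remainder; the two are equivalent.
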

\begin{proof}
	The proof is standard.
	Using nonexpansiveness of \(\prox_{\gamk*g}\) and optimality of \(x^\star\),
	\begin{align*}
		\|\bar x^k-x^\star\|
	={} &
		\|\prox_{\gamk*g}(x^k-\gamk*\nabla f(x^k))-\prox_{\gamk*g}(x^\star-\gamk*\nabla f(x^\star))\|
	\\
	\leq{} &
		\|x^k-x^\star\|+\gamk*\|\nabla f(x^k)-\nabla f(x^\star)\|
		\to
		0
		\quad\text{as }k\to\infty.
	\end{align*}
	Since \(f\) is continuous and \(\bar x^k\to x^\star\), it remains to show that \(g(\bar x^k)\to g(x^\star)\).
	For every \(k\in\N\), convexity of \(g\) and the inclusion \(\tfrac{x^k-\bar x^k}{\gamk*}-\nabla f(x^k)\in\partial g(\bar x^k)\) imply
	\begin{align*}
		g(x^k)
	\geq{} &
		g(\bar x^k)
		+
		\innprod{\tfrac{x^k-\bar x^k}{\gamk*}-\nabla f(x^k)}{x^k-\bar x^k}
	\shortintertext{and}
		g(x^\star)
	\geq{} &
		g(\bar x^k)
		+
		\innprod{\tfrac{x^k-\bar x^k}{\gamk*}-\nabla f(x^k)}{x^\star-\bar x^k}.
	\end{align*}
	If \(\varphi(x^k)\to\min\varphi\), that is, \(g(x^k)\to g(x^\star)\), the first inequality combined with lower semicontinuity of \(g\) concludes the proof.
	If instead \(\seq{\gamk}\) is bounded away from zero, the same conclusion follows from the second inequality.
\end{proof}

			\proofsection{thm:PG:cvg}
The sequence \(\seq{x^k}\) is bounded and the coefficients of \(P_{k-1}(x)\) and \(\|x^k-x^{k-1}\|^2\) in \eqref{eq:PG:ineq} are negative owing to \cref{thm:PG:ineq}.
Let us consider \(x\in\argmin\varphi\), and for notational conciseness let us write \(P_k=P_k(x)=\varphi(x^k)-\min\varphi\).
In particular, telescoping \eqref{eq:PG:ineq} yields that
\begin{equation}\label{eq:PG:gPto0}
	\lim_{k\to\infty}\gamk(1+\pik\rhok-\pik*\rhok*^2)P_{k-1}=0.
\end{equation}
\vspace*{-\baselineskip}%
\begin{proofitemize}
\item \itemref[e]{thm:PG:!x*}~
	We proceed by intermediate claims.
	\begin{claims}%
	\item
		{\it There exists an optimal limit point \(x_\infty\) of \(\seq{x^k}\).}

		Since \(\seq{x^k}\) is bounded, it suffices to show that \(\liminf_{k\to\infty}P_k=0\).
		If \(\limsup_{k\to\infty}(1+\pik\rhok-\pik*\rhok*^2)>0\), then the assertion directly follows from \eqref{eq:PG:gPto0}.
		If \(\seq{\gamk}\) is unbounded, then the claim follows from the fact that
		\begin{equation}\label{eq:PG:gamP}
			\gamk P_{k-1}\leq\U_k(x)\leq\U_1(x).
		\end{equation}
		We now argue that no other case is possible.
		To this end, to arrive to a contradiction, suppose that \(1+\pik\rhok-\pik*\rhok*^2\to0\) and that \(\seq{\gamk}\) is bounded.
		In particular, since it is also bounded away from zero, \(\seq{\rhok}\) too is both bounded and bounded away from zero, and in particular so is \(\seq{\pik}\).
		Then, \cref{thm:infrhok>1} applies and yields that \(\liminf_{k\to\infty}\rhok>1\), hence that \(\seq{\gamk}\) eventually diverges (exponentially fast), contradicting its assumed boundedness.

	\item \label{claim:thm:adaPG:optunique}%
		{\it \(x_\infty\) is the only limit point that belongs to \(\argmin\varphi\).}

		Suppose that \(x_\infty'\in\argmin\varphi\) is a limit point.
		Since \(\seq{\U_k(x_\infty')}\) and \(\seq{\U_k(x_\infty)}\) are both convergent, then so is
		\[
			\innprod{x^k}{x_\infty - x_\infty'}
		=
			\U_k(x_\infty') - \U_k(x_\infty) + \tfrac12\|x_\infty\|^2 - \tfrac12\|x_\infty'\|^2.
		\]
		Passing to the limit along the two converging subsequences thus yields
		\(
			\innprod{x_\infty}{x_\infty - x_\infty'}
		=
			\innprod{x_\infty'}{x_\infty - x_\infty'}
		\),
		which after rearranging results in \(\|x_\infty - x_\infty'\|^2=0\), establishing the claim.
	\end{claims}

\item \itemref[e]{thm:PG:x*}~
	Let \(x^\star\) be the optimal accumulation point of \(\seq{x^k}\) (exactly one exists as shown in the previous assertion).
	Note that the proof is complete once we show that \(\U_k(x^\star)\to0\).
	Since this limit exists, we remark that if \(\seq{\gamk}\) is bounded (and so are \(\seq{\pik}\) and \(\seq{\xik}\)), then for a set of indices \(K\subseteq\N\) such that \(\seq{x^k}[k\in K]\to x^\star\) one has that \(\seq{\|x^{k+2}-x^\star\|^2}[k\in K]\), \(\seq{P_{k+1}}[k\in K]\), and \(\seq{\|x^{k+2}-x^{k+1}\|^2}[k\in K]\) all vanish by virtue of \cref{thm:Pkto0} (applied twice), and thus so does \(\seq{\U_{k+2}(x^\star)}[k\in K]\).
%
	In what follows, we assume that \(\seq{\pik}\) and \(\seq{\xik}\) are bounded and bounded away from zero, but make no assumption on boundedness of \(\seq{\gamk}\).
	We then denote \(\pimin\coloneqq\inf_k\pik\), \(\pimax\coloneqq\sup_k\pik\), and \(\ximin\coloneqq\inf_k\xik\),  which are all finite and strictly positive quantities by assumption.

	Observe that, since \(\seq{\pik}\) is bounded and bounded away from zero, it follows from the inequality \(\rhok*^2 \leq \frac{1+\pik\rhok}{\pik*}\) that
	\begin{equation}\label{eq:PG:rhomax}
		\rho_{\rm max}
	\coloneqq
		\sup_{k\in\N}\rhok
	<
		\infty.
	\end{equation}

	Contrary to the claim, suppose that
	\[
		\text{(by contradiction)}
	\quad
		U
	\coloneqq
		\lim_{k\to\infty}\U_k(x^\star)
	>
		0.
	\]
	\begin{claims*}
	\item \label{claim*:PG:gamk*}%
		{\em For any \(K\subseteq\N\),
			\(
				\lim_{K\ni k\to\infty} x^k = x^\star
			\)
			holds iff
			\(
				\lim_{K\ni k\to\infty} \gamk* = \infty
			\).
		}%

		The implication ``\(\Leftarrow\)'' follows from \eqref{eq:PG:gamP}, since \(\seq{x^k}\) is bounded and \(x^\star\) is its unique optimal limit point.
		Suppose now that \(\seq{x^k}[k\in K]\to x^\star\).
		To arrive to a contradiction, up to possibly extracting suppose that \(\seq{\gamk*}[k\in K]\to\bar\gamma<\infty\).
		Then, it follows from \cref{thm:Pkto0} that \(\seq{x^{k+1}}[k\in K]\to x^\star\) and \(\seq{P_{k+1}}[k\in K]\to0\).
		Since \(\seq{\gamma_{k+2}}[k\in K]\) is also bounded owing to \eqref{eq:PG:rhomax}, we may iterate and infer that also \(\seq{x^{k+2}}[k\in K]\) converges to \(x^\star\).
		Recalling the definition of \(\U_k\) in \eqref{eq:PG:Uk}, boundedness (away from zero) of \(\seq{\xik}\) then implies that \(U=\lim_{K\ni k\to\infty}\U_{k+2}(x^\star)=0\), a contradiction.

	\item
		{\em Suppose that \(\seq{x^k}[k\in K]\to x^\star\); then also \(\seq{x^{k-1}}[k\in K]\to x_\star\).}

		It follows from the previous claim that \(\lim_{K\ni k\to\infty}\gamk*=\infty\).
		Because of \eqref{eq:PG:rhomax}, one must also have \(\lim_{K\ni k\to\infty}\gamk=\infty\).
		Invoking again the previous claim, by the arbitrarity of the index set \(K\) the assertion follows.

	\item
		{\em Suppose that \(\seq{x^k}[k\in K]\to x^\star\); then \(\seq{\gamma_{k-1}L_{k-1}}[k\in K]\to\infty\).}

		Using the previous claim twice yields that both \(x^{k-1},x^{k-2}\to x^\star\) as \(K\ni k\to\infty\).
		Hence,
		\[
			\lim_{K\ni k\to\infty}\|x^{k-1}-x^{k-2}\|^2=0.
		\]
		Since \(\seq{\xik}\) is bounded (away from zero), from the expression \eqref{eq:PG:Uk} of \(\U_k\) we then have
		\[
			\lim_{K\ni k\to\infty}\gamk(1+\pik\rhok)P_{k-1}
			=
			U,
		\]
		where we remind that \(U\coloneqq\lim_{k\to\infty}\U_k(x^\star)>0\).
		Denoting \(C\coloneqq\rho_{\rm max}(1+\pimax\rho_{\rm max})\), we have
		\begin{align*}
			\gamma_{k-1} P_{k-1}
		={} &
			\gamma_{k-1}(\varphi(x^{k-1})-\varphi(x^\star))
		\\
		\leq{} &
			\innprod{x^{k-2}-x^{k-1}-\gamma_{k-1}(\nabla f(x^{k-2})-\nabla f(x^{k-1}))}{x^\star-x^{k-1}}
		\\
		\leq{} &
			\innprod{x^{k-2}-x^{k-1}}{x^\star-x^{k-1}}
			+
			\gamma_{k-1}\|\nabla f(x^{k-2})-\nabla f(x^{k-1})\|\|x^\star-x^{k-1}\|
		\\
		={} &
			\innprod{x^{k-2}-x^{k-1}}{x^\star-x^{k-1}}
			+
			\gamma_{k-1}L_{k-1}
			\|x^{k-1}-x^{k-2}\|\|x^\star-x^{k-1}\|
		\end{align*}
		for every \(k\), where the last identity uses the definition \eqref{eq:lL}.
		Then,
		\begin{align*}
			0
		<
			U
		={} &
			\lim_{K\ni k\to\infty}
			\gamk(1+\pik\rhok)P_{k-1}
		\ifarticleclass\\\fi
		={} \ifarticleclass\myampersand\fi
			\liminf_{K\ni k\to\infty}
			\rhok(1+\pik\rhok)\gamma_{k-1}P_{k-1}
		\\
		\leq{} &
			\rho_{\rm max}(1+\pimax\rho_{\rm max})
			\liminf_{K\ni k\to\infty}
			\gamma_{k-1}L_{k-1}
			\underbracket*[0.5pt]{
				\|x^{k-1}-x^{k-2}\|\|x^\star-x^{k-1}\|
			}_{\to0},
		\end{align*}
		which yields the claim.

	\item \label{claim*:PG:rhokto0}%
		{\em Suppose that \(\seq{x^k}[k\in K]\to x^\star\); then \(\seq{\rhok}[k\in K]\to 0\).}

		Since \(\lk\leq L_k\) by \eqref{eq:lk<=ck}, it follows from the previous claim that \(\nicefrac{\gamk^2L_k^2}{\gamk\lk}\) diverges as \(K\ni k\to\infty\).
		Recalling that \(\rk\coloneqq\frac{\pik}{1+\xik}\), from the \(\gamk\)-update at \cref{state:pinuxik:gamk*}, we have
		\[
			0
		\leftarrow
			\tfrac{1+\xi_{k-1}}{\gamma_{k-1}^2L_{k-1}^2}
		\geq
			\rhok^2\biggl[
				\overbrace*{
					\vphantom{\tfrac{\pik-(1+\xik)}{\nicefrac{\gamk^2L_k^2}{\gamk\lk}}}
					\tfrac{1+\xik-2\pik}{\gamma_{k-1}^2L_{k-1}^2}
				}^{\to0}
				+
				\overbrace*{
					\vphantom{\tfrac{\pik-(1+\xik)}{\nicefrac{\gamk^2L_k^2}{\gamk\lk}}}
					(1+\xik\bigr)
				}^{\geq 1}
				+
				\overbrace*{
					2\tfrac{\pik-(1+\xik)}{\nicefrac{\gamk^2L_k^2}{\gamk\lk}}
				}^{\to0}
			\biggr]
		\quad
			\text{as }K\ni k\to\infty,
		\]
		where boundedness of \(\seq{\pik}\) and \(\seq{\xik}\) were used.
	\end{claims*}

	We now conclude the whole proof by constructing a specific subsequence that yields the sought contradiction.
	Observe first that \(\seq{\gamk}\) is necessarily unbounded, which owes to claim \ref{claim*:PG:gamk*} and the fact that \(x^\star\) is an optimal limit point.
	Then, starting with \(k_0=1\) the sequence recursively defined as
	\[
		k_{i+1}=\min\set{k\geq k_i}[\gamk\geq 2\gamma_{k_i}],
	\quad
		i\in\N,
	\]
	is well defined and such that \(\gamma_{k_i}\to\infty\) as \(i\to\infty\).
	A trivial induction reveals that
	\[
		\gamma_{k_i}
	>
		\max_{1\leq j<k_i}\gamma_j
	\quad
		\forall i\geq1.
	\]
	Moreover, claim \ref{claim*:PG:gamk*} implies that \(x^{k_i-1}\to x^\star\) as \(i\to\infty\) and, in turn, claim \ref{claim*:PG:rhokto0} yields that \(\rho_{k_i-1}\to 0\) as \(i\to\infty\).
	In particular, for \(i\) large enough \(\rho_{k_i-1}\leq\rho_{\rm max}^{-1}\), whence the contradiction
	\[
		\gamma_{k_i-2}<\gamma_{k_i}
	\leq
		\rho_{\rm max}\gamma_{k_i-1}
	=
		\rho_{\rm max}\rho_{k_i-1}\gamma_{k_i-2}
	\leq
		\gamma_{k_i-2}.
	\]
	The proof is completed.
\end{proofitemize}

		\end{appendix}
	\fi
\end{document}